\renewcommand{\@seccntformat}[1]{{\csname the#1\endcsname}{\normalsize.}\hspace{.5em}}
\numberwithin{equation}{section}
\def \[{\begin{equation}}
\def \]{\end{equation}}
\newtheorem{thm}{Theorem}[section]
\newtheorem{defi}[thm]{Definition}
\newtheorem{lem}[thm]{Lemma}
\newtheorem{prop}[thm]{Proposition}
\newenvironment{kst}
{\setlength{\leftmargini}{1.3\parindent}
 \begin{itemize}
 \setlength{\itemsep}{-1.1mm}}
{\end{itemize}}
\begin{document}
\setlength{\baselineskip}{16.5pt}

\begin{center}{\Large \bf Tur\'{a}n problems for star-path forests in hypergraphs}

\vspace{4mm} {\large Junpeng Zhou$^{\rm{a,b}}$, Xiying Yuan$^{\rm{a,b}*}$} \vspace{2mm}

{\small $^\text{a}$ Department of Mathematics, Shanghai University, Shanghai 200444, P.R. China}

{\small $^\text{b}$ Newtouch Center for Mathematics of Shanghai University, Shanghai 200444, P.R. China}

\end{center}

\footnotetext{*Corresponding author. Email address: xiyingyuan@shu.edu.cn (Xiying Yuan)}

\footnotetext{junpengzhou@shu.edu.cn (Junpeng Zhou)}

\footnotetext{This work is supported by the National Natural Science Foundation of China (Nos.11871040, 12271337, 12371347)}

\begin{abstract}
An $r$-uniform hypergraph ($r$-graph for short) is linear if any two edges intersect at most one vertex.
Let $\mathcal{F}$ be a given family of $r$-graphs. An $r$-graph $H$ is called $\mathcal{F}$-free if $H$ does not contain any member of $\mathcal{F}$ as a subgraph. The Tur\'{a}n number of $\mathcal{F}$ is the maximum number of edges in any $\mathcal{F}$-free $r$-graph on $n$ vertices, and the linear Tur\'{a}n number of $\mathcal{F}$ is defined as the Tur\'{a}n number of $\mathcal{F}$ in linear host hypergraphs.
An $r$-uniform linear path $P^r_\ell$ of length $\ell$ is an $r$-graph with edges $e_1,\dots,e_\ell$ such that $|V(e_i)\cap V(e_j)|=1$ if $|i-j|=1$, and $V(e_i)\cap V(e_j)=\emptyset$ for $i\neq j$ otherwise. Gy\'{a}rf\'{a}s et al. [\textit{European J. Combin.} (2022) 103435] obtained an upper bound for the linear Tur\'{a}n number of $P_\ell^3$. In this paper, an upper bound for the linear Tur\'{a}n number of $P_\ell^r$ is obtained, which generalizes the known result of $P_\ell^3$ to any $P_\ell^r$. Furthermore, some results for the linear Tur\'{a}n number and Tur\'{a}n number of several linear star-path forests are obtained.
\end{abstract}

{\noindent{\bf Keywords}: Tur\'{a}n number, linear Tur\'{a}n number, linear hypergraph, star-path forests}

{\noindent{\bf AMS subject classifications:} 05C35, 05C65}

\section{\normalsize Introduction}
\ \ \ \
A \textit{hypergraph} $H=(V(H),E(H))$ consists of a vertex set $V(H)$ and an edge set $E(H)$, where each entry in $E(H)$ is a nonempty subset of $V(H)$. If $|V(e)|=r$ for every $e\in E(H)$, then $H$ is called an \textit{$r$-uniform hypergraph} ($r$-graph for short). A hypergraph $H$ is \textit{linear} if any two edges intersect at most one vertex. 
The \textit{degree} $d_H(v)$ of vertex $v$ (or simply $d(v)$) is defined as the number of edges containing $v$. Denote by $\Delta(H)$ and $\delta(H)$ the maximum degree and the minimum degree of $H$, respectively. For $U\subseteq V(H)$, let $H-U$ denote the $r$-graph obtained from $H$ by removing the vertices in $U$ and the edges incident to them.
Let $H\cup G$ denote the disjoint union of hypergraphs $H$ and $G$. The disjoint union of $k$ hypergraphs $H$ is denoted by $kH$.

Given a family $\mathcal{F}$ of $r$-graphs, an $r$-graph $H$ is called \textit{$\mathcal{F}$-free} if $H$ does not contain any member of $\mathcal{F}$ as a subgraph. The \textit{Tur\'{a}n number} ${\rm{ex}}_r(n,\mathcal{F})$ of $\mathcal{F}$ is the maximum number of edges in any $\mathcal{F}$-free $r$-graph on $n$ vertices. Collier-Cartaino et al. \cite{AA6} defined the linear Tur\'{a}n number of hypergraphs, and the \textit{linear Tur\'{a}n number} ${\rm{ex}}^{\rm{lin}}_r(n,\mathcal{F})$ of $\mathcal{F}$ is the Tur\'{a}n number of $\mathcal{F}$ in linear host hypergraphs. If $\mathcal{F}=\{G\}$, then we use ${\rm{ex}}_r(n,G)$ and ${\rm{ex}}^{\rm{lin}}_r(n,G)$ in place of ${\rm{ex}}_r(n,\{G\})$ and ${\rm{ex}}^{\rm{lin}}_r(n,\{G\})$.

The Tur\'{a}n number of hypergraphs has been studied extensively \cite{AA9,AA10,AA11}. In particular, the linear Tur\'{a}n number of hypergraphs have been studied in \cite{AA2,AA6,AA12,B5,B8,D4}.
An \textit{$r$-uniform linear cycle} $C^r_{\ell}$ of length $\ell$ is an $r$-graph with edges $e_1,\dots,e_\ell$ such that $|V(e_i)\cap V(e_{i+1})|=1$ for all $1\leq i\leq\ell-1$, $|V(e_\ell)\cap V(e_1)|=1$ and $V(e_i)\cap V(e_j)=\emptyset$ for all other pairs $\{i,j\},i\neq j$ \cite{AA6}.
Determining the ${\rm{ex}}^{\rm{lin}}_3(n,C_3^3)$ is equivalent to the famous $(6,3)$-problem. In 1976, Ruzsa and Szemer\'{e}di \cite{AA7} showed that
\begin{eqnarray*}
\frac{n^2}{e^{O(\sqrt{\log n})}}<{\rm{ex}}^{\rm{lin}}_3(n,C_3^3)=o(n^2).
\end{eqnarray*}
For integers $r,\ell\geq3$, Collier-Cartaino et al. \cite{AA6} proved that
\begin{eqnarray*}
{\rm{ex}}^{\rm{lin}}_r(n,C_\ell^r)\leq cn^{1+\frac{1}{\lfloor\frac{\ell}{2}\rfloor}},
\end{eqnarray*}
where $c=c(r,\ell)>0$ is some constant.

An \textit{$r$-uniform linear path} $P^r_\ell$ of length $\ell$ is an $r$-graph with edges $e_1,\dots,e_\ell$ such that $|V(e_i)\cap V(e_j)|=1$ if $|i-j|=1$, and $V(e_i)\cap V(e_j)=\emptyset$ for $i\neq j$ otherwise. In the case of the $r$-uniform linear path, Kostochka et al. \cite{AA3} proved that ${\rm{ex}}_r(n,P_\ell^r)={\rm{ex}}_r(n,C_\ell^r)$ for fixed $r\geq3$, $\ell\geq4$, $(\ell,r)\neq(4,3)$ and sufficiently large $n$.
For integers $r\geq3,\ell\geq1,k\geq2$ and sufficiently large $n$, Bushaw and Kettle \cite{AA5} determined the ${\rm{ex}}_r(n,kP_\ell^r)$.
For the linear Tur\'{a}n number of $P_\ell^r$, Gy\'{a}rf\'{a}s et al. \cite{AA2} showed that
\begin{eqnarray}
{\rm{ex}}_3^{\rm{lin}}(n,P_\ell^3)\leq 1.5\ell n
\end{eqnarray}
for any $\ell\geq3$ and mentioned that it does not seem easy to find the asymptotic of the linear Tur\'{a}n number of $P_\ell^r$. We obtain an upper bound for ${\rm{ex}}_r^{\rm{lin}}(n,P_\ell^r)$, which generalizes (1.1) to any $P_\ell^r$ as follows.

\begin{thm}
Fix integers $r\geq3$ and $\ell\geq4$. Then
\begin{eqnarray*}
{\rm{ex}}_r^{\rm{lin}}(n,P_\ell^r)\leq \frac{(2r-3)\ell}{2}n.
\end{eqnarray*}
\end{thm}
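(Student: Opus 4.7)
The plan is to prove the bound by induction on $n$, reducing it to showing that every $P_\ell^r$-free linear $r$-graph $H$ on $n$ vertices contains a vertex of degree at most $\lfloor\tfrac{(2r-3)\ell}{2}\rfloor$. Given such a vertex $v$, the inductive step follows immediately since $H-v$ is still a $P_\ell^r$-free linear $r$-graph, giving
\[
|E(H)|\le |E(H-v)|+d(v)\le \tfrac{(2r-3)\ell}{2}(n-1)+\tfrac{(2r-3)\ell}{2}=\tfrac{(2r-3)\ell}{2}n.
\]
Base cases for small $n$ are handled by the trivial bound $|E(H)|\le\binom{n}{r}$.

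To locate the low-degree vertex I would take a longest linear path $P=e_1e_2\cdots e_m$ in $H$ and may assume $m\le\ell-1$. For a free vertex $v\in e_m\setminus e_{m-1}$, any edge $f\ne e_m$ through $v$ must satisfy $f\cap e_m=\{v\}$ by linearity and $f\cap(V(P)\setminus e_m)\ne\emptyset$ by the maximality of $P$. Because the edges through $v$ pairwise meet only in $v$, and $|V(P)\setminus e_m|=(r-1)(m-1)$, this yields $d(v)\le(r-1)(m-1)+1$, with a symmetric bound at each of the $r-1$ free vertices of $e_1$.

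This basic estimate $(r-1)(m-1)+1$ is at most $\tfrac{(2r-3)\ell}{2}$ only in the range $\ell\le 4r-6$. For larger $\ell$ I would sharpen it by rotations of $P$ at its anchor vertices $v^*=e_{m-1}\cap e_m$ and $u^*=e_1\cap e_2$: each edge $g$ through $v^*$ with $g\notin\{e_{m-1},e_m\}$ and $g\cap(e_1\cup\cdots\cup e_{m-2})=\emptyset$ produces an alternative longest path $e_1\cdots e_{m-1}g$, whose own free vertices inherit the same degree bound $(r-1)(m-1)+1$. Summing these inequalities across the admissible rotations at both anchor vertices and averaging, I expect to extract a vertex of $H$ whose degree is at most $\tfrac{(2r-3)\ell}{2}$.

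The main obstacle is controlling admissibility of rotations: unlike in graphs, non-consecutive edges of a linear hypergraph path must be entirely disjoint, so a naively chosen rotation edge $g$ may collide with some earlier path edge $e_i$ with $i\le m-2$. Bounding the number of such inadmissible rotations by the linearity of $H$ (each prefix edge $e_i$ can block only a controlled number of candidates through $v^*$), and then verifying that enough admissible rotations remain when $\delta(H)>\tfrac{(2r-3)\ell}{2}$, is the delicate step. The arithmetic that yields the precise coefficient $(2r-3)/2$, rather than something weaker, comes from carefully balancing the $r-1$ free-vertex contributions at each end against the rotational contributions at each anchor, and this balancing is the most involved part of the proof.
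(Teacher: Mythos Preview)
Your reduction to showing $\delta(H)\le\frac{(2r-3)\ell}{2}$ is exactly how the paper sets things up, and your basic bound $d(v)\le(r-1)(m-1)+1$ for a free end vertex of a longest path is correct. But the heart of the theorem---sharpening this to $\frac{(2r-3)\ell}{2}$ when $\ell>4r-6$---is not carried out in your proposal: you describe a rotation-and-averaging scheme, concede that controlling admissible rotations is ``the delicate step,'' and leave the arithmetic producing the coefficient $(2r-3)/2$ unspecified. Since that sharpening is the entire content of the theorem outside the trivial range, this is a genuine gap rather than a routine omission.

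For comparison, the paper does \emph{not} use rotations. Working with a copy of $P_{\ell-1}^r$ in $H$, it defines $A_k(u)$ (resp.\ $B_k(v)$) as the set of edges through a left (resp.\ right) end vertex meeting the rest of the path in exactly $k$ vertices. Linearity gives the trivial bound $\sum_{k\ge1}k|A_k(u)|\le(r-1)(\ell-1)$; the crux is a separate lemma showing, by a case analysis on each interior segment $v_{i(r-1)},v_{i(r-1)+1}$ (each such segment forces at least $2(r-1)$ of the a priori possible edges in $A_1\cup B_1$ to be absent, else a $P_\ell^r$ appears), that some pair $(u,v)$ satisfies $|A_1(u)|+|B_1(v)|\le 2(r-2)(\ell-3)$. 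Adding these two estimates and using $|A_k|\le\frac{k}{2}|A_k|$ for $k\ge2$ yields $d(u)<\frac{(2r-3)\ell}{2}$ or $d(v)<\frac{(2r-3)\ell}{2}$. The constant $(2r-3)/2$ thus emerges from the combination $\frac{1}{2}\bigl((r-2)(\ell-3)+(r-1)(\ell-1)\bigr)+(r-1)$, not from any rotation count; if your rotation route is to succeed it would need an independent mechanism producing this same quantity, and nothing in the proposal indicates how that would happen.
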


Let $S_\ell$ denote the star with $\ell$ edges. An \textit{$r$-uniform linear star} $S^r_\ell$ is an $r$-graph 
obtained from $S_\ell$ by inserting $r-2$ new distinct vertices in each edge of $S_\ell$. In the case of the $r$-uniform linear star, Duke and Erd\H{o}s \cite{AA4} gave an upper bound for the Tur\'{a}n number of $S_\ell^r$. For integers $\ell,k\geq1,r\geq2$ and sufficiently large $n$, Khormali and Palmer \cite{D4} determined the ${\rm{ex}}_r(n,kS_\ell^r)$ and ${\rm{ex}}_r^{\rm{lin}}(n,kS_\ell^r)$.

Based on some specific linear $r$-graphs, which are constructed by using the combinational design, integer lattice and Cartesian product of hypergraphs, we give some lower bounds for the linear Tur\'{a}n number of the linear path and a kind of linear star-path forests in Section 3.

Moreover, an upper bound for the linear Tur\'{a}n number of this kind of linear star-path forests will be presented in Theorem 1.2.

\begin{thm}
Fix integers $r\geq3$, $\ell_0\geq4$ and $k\geq0$. Let $\ell_1\geq\ell_2\geq\cdots\geq\ell_k$ and $\ell=\max\{\ell_0,\ell_1\}$ when $k\geq1$.
Then for sufficiently large $n$,
\begin{eqnarray*}
{\rm{ex}}_r^{\rm{lin}}(n,P_{\ell_0}^r\cup S_{\ell_1}^r\cup\cdots\cup S_{\ell_k}^r)\leq \Big(\frac{k}{r-1}+\frac{(2r-3)\ell}{2}\Big)(n-k)+\frac{k(k-1)}{r(r-1)}.
\end{eqnarray*}
\end{thm}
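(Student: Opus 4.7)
The strategy is to reduce the theorem to the structural claim that every $F$-free linear $r$-graph $H$ on $n$ vertices (with $n$ sufficiently large) admits a set $A\subseteq V(H)$ with $|A|\le k$ such that $H-A$ is $P_\ell^r$-free. Given this, the bound follows by a linearity-based double count: starting from the fact that $d_H(v)\le(n-1)/(r-1)$ for every vertex and refining this estimate for $v\in A$ by splitting the $r-1$ other endpoints of each edge through $v$ into those in $A$ and those outside, one shows that the edges meeting $A$ number at most $\frac{|A|(n-|A|)}{r-1}+\frac{|A|(|A|-1)}{r(r-1)}$. Combined with $|E(H-A)|\le\frac{(2r-3)\ell}{2}(n-|A|)$ from Theorem~1.1, a short monotonicity computation in $|A|$ gives $|E(H)|\le f(n,k)$ for every $|A|\le k$ and $n$ large.

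I would prove the structural claim by induction on $k$. The base $k=0$ is immediate since $P_{\ell_0}^r\subseteq P_\ell^r$ forces $P_{\ell_0}^r$-freeness to imply $P_\ell^r$-freeness, so $A=\varnothing$ suffices. For the inductive step, set $D:=\ell+k(1+\ell(r-1))$. If $\Delta(H)\ge D$, take a vertex $v$ of maximum degree; any embedding of $F^-:=P_{\ell_0}^r\cup S_{\ell_1}^r\cup\cdots\cup S_{\ell_{k-1}}^r$ in $H-v$ spans at most $k(1+\ell(r-1))$ vertices, each blocking by linearity at most one edge through $v$, so at least $\ell_k$ edges at $v$ would remain to form an $S_{\ell_k}^r$ disjoint from $F^-$ and produce a copy of $F$ in $H$, a contradiction. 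Hence $H-v$ is $F^-$-free, the inductive hypothesis supplies $A'\subseteq V(H-v)$ with $|A'|\le k-1$ and $(H-v)-A'$ being $P_\ell^r$-free, and $A=A'\cup\{v\}$ works.

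The main obstacle is the remaining case $\Delta(H)<D$. If $H$ is $P_\ell^r$-free we simply take $A=\varnothing$; otherwise, fix a $P_\ell^r$ in $H$, extract its first $\ell_0$ edges as a $P_{\ell_0}^r$ on the vertex set $V_0$, and use that $H-V_0$ must avoid the star-forest $S_{\ell_1}^r\cup\cdots\cup S_{\ell_k}^r$ (else $F\subseteq H$). A greedy argument analogous to Case~A then forces at most $k-1$ vertices of $H-V_0$ to have degree above a secondary threshold; peeling off this set $A^*$ makes $H-A^*$ free of the smaller forest with $k-|A^*|$ stars, and invoking the inductive hypothesis at the strictly smaller parameter $k-|A^*|<k$ yields the remaining vertices needed to complete $A$. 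The delicacy is calibrating $D$ and the secondary threshold so that every greedy embedding succeeds and the constant-order contributions from edges inside $V_0$ and between $V_0$ and its complement are absorbed into the $(n-k)$ factors under the assumption that $n$ is sufficiently large.
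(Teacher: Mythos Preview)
Your central structural claim---that every $F$-free linear $r$-graph on sufficiently many vertices admits $A\subseteq V(H)$ with $|A|\le k$ and $H-A$ being $P_\ell^r$-free---is false. Take $H$ to be the disjoint union of $m>k$ vertex-disjoint copies of the bare path $P_\ell^r$, and suppose $\ell_1\ge 3$ (for instance $\ell_1=\ell_0$). Then $\Delta(H)=2<\ell_1$, so $H$ contains no $S_{\ell_1}^r$ whatsoever and is trivially $F$-free; yet any $A$ with $|A|\le k$ misses at least one component entirely, and that intact $P_\ell^r$ survives in $H-A$. Since $n=m(\ell(r-1)+1)$ can be arbitrarily large, the ``$n$ sufficiently large'' hypothesis does not rescue the claim.

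The same example pinpoints where Case~B breaks. There $\Delta(H)=2$, so no vertex of $H-V_0$ exceeds any secondary threshold $\ge 3$; the greedy peel returns $A^*=\varnothing$, and your ``strictly smaller parameter $k-|A^*|$'' equals $k$, so the induction never advances. Even in situations where $A^*\ne\varnothing$, what you establish is only that $H-V_0-A^*$ has bounded degree, which does not say that $H-A^*$ is free of the shorter forest, so the inductive hypothesis cannot be invoked as written.

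The paper's argument does not attempt a structural lemma. It argues by contradiction directly on the edge count: assuming $|E(H)|$ exceeds the bound, the Khormali--Palmer bound on $\mathrm{ex}_r^{\mathrm{lin}}(n,kS_\ell^r)$ gives $k$ vertex-disjoint copies of $S_\ell^r$ in $H$; deleting any one of them and applying the inductive bound to the remainder forces each such star to contain a vertex of degree at least $\frac{n}{(r-1)(\ell(r-1)+1)}$. Collecting those $k$ high-degree vertices into $U$, your double count (which is correct) then runs in reverse: the assumed surplus makes $|E(H[\bar U])|>\frac{(2r-3)\ell}{2}(n-k)$, so Theorem~1.1 produces a $P_{\ell_0}^r$ inside $\bar U$, and the $k$ linear-in-$n$ degree vertices in $U$ supply $k$ disjoint stars avoiding that path. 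The essential difference is that the high-degree set $U$ is manufactured \emph{from} the edge surplus, not from an unconditional property of $F$-free hypergraphs.
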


The following two bounds for the Tur\'{a}n number of two kinds of linear star-path forests are obtained in Section 5.

\begin{thm}
Fix integers $r\geq3$, $\ell\geq4$ and $k\geq0$. For sufficiently large $n$,
\begin{eqnarray*}
\binom{n}{r}-\binom{n-k}{r}+{\rm{ex}}_r(n-k,\{P_\ell^r, S_\ell^r\})\leq {\rm{ex}}_r(n,P_\ell^r\cup kS_\ell^r)\leq \binom{n}{r}-\binom{n-k}{r}+{\rm{ex}}_r(n-k,P_\ell^r).
\end{eqnarray*}
\end{thm}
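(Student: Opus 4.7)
I handle the two inequalities separately.

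\smallskip
\noindent\emph{Lower bound.} I would exhibit an explicit $P_\ell^r\cup kS_\ell^r$-free $r$-graph achieving the claimed edge count. Fix $A\subseteq V$ with $|A|=k$, let $H_1$ consist of all edges of $K_n^r$ meeting $A$ (contributing exactly $\binom{n}{r}-\binom{n-k}{r}$ edges), and on $V\setminus A$ place an $r$-graph $H_2$ that is simultaneously $S_\ell^r$-free and $P_\ell^r$-free with $|E(H_2)|={\rm{ex}}_r(n-k,S_\ell^r)$. For $n$ sufficiently large, such an $H_2$ can be constructed as a disjoint union of small design-type blocks (e.g., copies of $K_m^r$ with $m<1+\ell(r-1)$, chosen so that each vertex has degree $\ell-1$), possibly with a bounded adjustment for divisibility. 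Set $H=H_1\cup H_2$. To check that $H$ is $P_\ell^r\cup kS_\ell^r$-free, suppose the contrary and let $P\cup S_1\cup\cdots\cup S_k\subseteq H$ be such a configuration. Any star $S_i$ whose center lies outside $A$ has $H_2$-degree at most $\ell-1$ at its center, so at least one of its edges meets $A$ as a leaf; stars centered in $A$ automatically use an $A$-vertex. Vertex-disjointness of the $k$ stars then forces them to collectively cover all of $A$, leaving $P\subseteq V\setminus A$, so $P\subseteq H_2$, contradicting the $P_\ell^r$-freeness of $H_2$.

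\smallskip
\noindent\emph{Upper bound.} Let $H$ be any $P_\ell^r\cup kS_\ell^r$-free $r$-graph on $n$ vertices, and let $A=\{a_1,\ldots,a_k\}$ be the set of $k$ vertices of largest degree in $H$. The key claim is that $H-A$ must be $P_\ell^r$-free; granting this, since the number of edges of $H$ meeting $A$ is at most $\binom{n}{r}-\binom{n-k}{r}$,
\[
|E(H)|\le|E(H-A)|+\Bigl(\binom{n}{r}-\binom{n-k}{r}\Bigr)\le {\rm{ex}}_r(n-k,P_\ell^r)+\binom{n}{r}-\binom{n-k}{r}.
\]
To prove the claim, I would suppose $H-A$ contains $P\cong P_\ell^r$ and then build, greedily, $k$ pairwise vertex-disjoint copies $S_1,\ldots,S_k$ of $S_\ell^r$ centered at $a_1,\ldots,a_k$, all disjoint from $P$, producing $P_\ell^r\cup kS_\ell^r\subseteq H$ and hence a contradiction. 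When selecting the next edge of $S_i$, the forbidden vertex set (the vertices of $P$, of previously constructed $S_j$'s, of the current partial star $S_i$, and $A\setminus\{a_i\}$) has constant size $C=C(k,\ell,r)$, so only $O_{k,\ell,r}(\binom{n-2}{r-2})$ edges through $a_i$ can meet it; the step therefore succeeds whenever $d_H(a_i)$ is a large enough constant multiple of $\binom{n-2}{r-2}$.

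\smallskip
The principal obstacle is guaranteeing that each of $d_H(a_1),\ldots,d_H(a_k)$ meets this $\Omega(n^{r-2})$ threshold. I would handle this via a case analysis on the degree sequence of $H$: if fewer than $k$ vertices of $H$ attain the threshold, then a refined edge-counting argument---splitting $E(H)$ into edges incident to the high-degree set and edges lying in the bounded-degree remainder, and using that vertices of small degree cannot serve as star centers so that the bounded-degree part is forced closer to being $S_\ell^r$-free---yields the required bound on $|E(H)|$ directly; otherwise $A$ consists of vertices of sufficiently high degree and the greedy construction above completes for $n$ sufficiently large. This case split is the most delicate step of the argument.
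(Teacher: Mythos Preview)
Your lower bound is essentially the paper's construction: a set $A$ of $k$ ``full'' vertices together with an extremal $S_\ell^r$-free graph $G$ on the complement. Both arguments need $G$ to also be $P_\ell^r$-free; the paper asserts this in one line, while you propose an explicit block construction, though justifying that it realizes exactly ${\rm ex}_r(n-k,S_\ell^r)$ is not automatic.

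For the upper bound your approach diverges from the paper's and carries a real gap. The paper argues by induction on $k$: assuming $|E(H)|$ exceeds the bound, it splits on whether $\Delta(H)$ lies above or below the threshold $T=\dfrac{\binom{n}{r}-\binom{n-k}{r}}{k(\ell(r-1)+1)}$ (note $T=\Theta(n^{r-1})$, not $\Theta(n^{r-2})$). If $\Delta(H)\ge T$, delete a maximum-degree vertex $u$, apply induction to find $P_\ell^r\cup(k-1)S_\ell^r$ in $H-u$, then build the last star at $u$. If $\Delta(H)<T$, apply the inductive bound for $k-1$ to $H$ itself to locate $P_\ell^r\cup(k-1)S_\ell^r$; removing its $k(\ell(r-1)+1)$ vertices deletes at most $k(\ell(r-1)+1)\Delta(H)<\binom{n}{r}-\binom{n-k}{r}$ edges, leaving more than ${\rm ex}_r(n-k,P_\ell^r)\ge{\rm ex}_r(n-k,S_\ell^r)$ edges and hence one more star.

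Your direct approach---take $A$ to be the $k$ top-degree vertices and argue $H-A$ is $P_\ell^r$-free---breaks down precisely when the $a_i$ have moderate degree. The greedy star construction needs $d_H(a_i)>C\binom{n-2}{r-2}$ with $C$ on the order of $k\ell r$. Your fallback reasoning is incorrect: having degree below $C\binom{n-2}{r-2}$ is nowhere near having degree below $\ell$, so such vertices \emph{can} center an $S_\ell^r$, and the bounded-degree part is not forced toward $S_\ell^r$-freeness at all. A crude degree-sum bound in this regime yields only $r|E(H)|\le(k-1)\binom{n-1}{r-1}+nC\binom{n-2}{r-2}$, i.e.\ $|E(H)|\lesssim\frac{k-1+C(r-1)}{r}\binom{n-1}{r-1}$, and since $C\approx k\ell r$ this overshoots the target $\bigl(k+\lfloor\frac{\ell-1}{2}\rfloor\bigr)\binom{n-1}{r-1}$ by a large factor. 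To repair this case you would have to recurse on the number of genuinely high-degree vertices, which is exactly the paper's induction.
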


\begin{thm}
Fix integers $r\geq3$, $\ell\geq4$, $k_1\geq2$ and $k_2\geq0$. For sufficiently large $n$,
\begin{eqnarray*}
{\rm{ex}}_r(n,k_1P_\ell^r\cup k_2S_\ell^r)\leq \binom{n}{r}-\binom{n-k_2}{r}+{\rm{ex}}_r(n-k_2,k_1P_\ell^r)
\end{eqnarray*}
and
\begin{eqnarray*}
{\rm{ex}}_r(n,k_1P_\ell^r\cup k_2S_\ell^r)\geq \binom{n}{r}-\binom{n-k_1-k_2+1}{r}+{\rm{ex}}_r(n-k_1-k_2+1,\{P_\ell^r, S_\ell^r\}).
\end{eqnarray*}
\end{thm}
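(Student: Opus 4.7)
For the \emph{upper bound} I would argue by induction on $k_2$. The base case $k_2=0$ is immediate, since the inequality reduces to $\mathrm{ex}_r(n,k_1P_\ell^r)\leq\mathrm{ex}_r(n,k_1P_\ell^r)$. For the inductive step, suppose for contradiction that $H$ is a $(k_1P_\ell^r\cup k_2S_\ell^r)$-free $r$-graph on $n$ vertices with $e(H)>\binom{n}{r}-\binom{n-k_2}{r}+\mathrm{ex}_r(n-k_2,k_1P_\ell^r)$. Fix a threshold $T=T(k_1,k_2,\ell,r)\cdot n^{r-2}$; if every vertex has $d_H(v)<T$, then $e(H)\leq Tn/r$, which is below the assumed lower bound for $n$ large and $T$ suitably tuned, a contradiction. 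Hence some $v$ satisfies $d_H(v)\geq T$. If $H-v$ is $(k_1P_\ell^r\cup(k_2-1)S_\ell^r)$-free, the induction hypothesis gives $e(H-v)\leq\binom{n-1}{r}-\binom{n-k_2}{r}+\mathrm{ex}_r(n-k_2,k_1P_\ell^r)$, and combining with $d_H(v)\leq\binom{n-1}{r-1}$ via Pascal's identity $\binom{n-1}{r}+\binom{n-1}{r-1}=\binom{n}{r}$ contradicts the assumption on $e(H)$. Otherwise $H-v$ contains a copy $F'$ of $k_1P_\ell^r\cup(k_2-1)S_\ell^r$ on the fixed-size vertex set $V(F')$; since the number of $H$-edges through $v$ meeting any fixed constant-size set is at most $|V(F')|\binom{n-2}{r-2}=O(n^{r-2})$, the lower bound $d_H(v)\geq T$ lets me greedily select $\ell$ edges at $v$ that are pairwise disjoint off $v$ and avoid $V(F')$. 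These form an $S_\ell^r$ disjoint from $F'$, giving a copy of $k_1P_\ell^r\cup k_2S_\ell^r$ in $H$, a contradiction.

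For the \emph{lower bound} I would take $W\subseteq V$ with $|W|=k_1+k_2-1$, include every $r$-edge of $V$ that meets $W$, and adjoin on $V\setminus W$ an extremal $S_\ell^r$-free $r$-graph $H'$ that is additionally $P_\ell^r$-free. For $n$ sufficiently large such $H'$ can be assembled (modulo lower-order corrections) from disjoint copies of $(\ell-1)$-regular linear $r$-designs on $(r-1)(\ell-1)+1<\ell(r-1)+1$ vertices each (Fano planes when $(r,\ell)=(3,4)$, or analogous partial Steiner systems in general); each such block has too few vertices to host any $P_\ell^r$, while the collection as a whole attains $\mathrm{ex}_r(n-k_1-k_2+1,S_\ell^r)$ edges. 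Then $e(H)=\binom{n}{r}-\binom{n-k_1-k_2+1}{r}+\mathrm{ex}_r(n-k_1-k_2+1,S_\ell^r)$. To verify $H$ contains no copy $F$ of $k_1P_\ell^r\cup k_2S_\ell^r$, suppose for contradiction that it does. Each star in $F$ must meet $W$: if its centre $c$ lay in $V\setminus W$, at most $\ell-1$ of its star edges could lie in the $S_\ell^r$-free graph $H'$, so at least one star edge must use a $W$-vertex. Each path in $F$ must also meet $W$, as otherwise it would be a $P_\ell^r$ inside the $P_\ell^r$-free $H'$. Since the $k_1+k_2$ components of $F$ are pairwise vertex-disjoint, they use at least $k_1+k_2$ distinct vertices of $W$, exceeding $|W|=k_1+k_2-1$.

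The \emph{main difficulty} is the calibration of constants in the upper-bound threshold $T$: the same $T$ must be small enough that ``no vertex of degree $\geq T$'' forces $e(H)\leq Tn/r$ to drop below the stated bound, and large enough that the greedy $\ell$-edge extension at a vertex of degree $\geq T$ succeeds. This couples a Case-2 inequality of the form $T\leq r\cdot(\text{bound})/n$ with a Case-1 inequality $T\gtrsim(|V(F')|+(\ell-1)(r-1))\binom{n-2}{r-2}$, and closing both requires $n$ to be taken sufficiently large relative to $k_1,k_2,\ell,r$ (this is precisely what the ``sufficiently large $n$'' hypothesis supplies). A secondary, more technical point is that the lower-bound construction needs an extremal $S_\ell^r$-free $r$-graph that happens to be $P_\ell^r$-free and achieves $\mathrm{ex}_r(n-k_1-k_2+1,S_\ell^r)$ exactly; divisibility issues when $n-k_1-k_2+1$ is not a multiple of the chosen block size may require small ad hoc adjustments, but such blocks exist by classical design theory for $n$ large.
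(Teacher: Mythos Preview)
The upper-bound argument has a genuine gap: the threshold $T$ cannot be tuned to close both cases simultaneously, and taking $n$ large does not help. In your ``all degrees small'' case, the estimate $e(H)\le Tn/r$ forces
\[
T\ \le\ \frac{r}{n}\Bigl(\tbinom{n}{r}-\tbinom{n-k_2}{r}+\mathrm{ex}_r(n-k_2,k_1P_\ell^r)\Bigr)\ \sim\ \frac{r\bigl(k_2+k_1\lfloor(\ell+1)/2\rfloor\bigr)}{(r-1)!}\,n^{r-2},
\]
while the greedy extension in your ``large degree'' case needs
\[
T\ >\ \bigl(|V(F')|+(\ell-1)(r-1)\bigr)\tbinom{n-2}{r-2}\ \sim\ \frac{(k_1+k_2-1)(\ell(r-1)+1)+(\ell-1)(r-1)}{(r-2)!}\,n^{r-2}.
\]
Both constraints are of order $n^{r-2}$, so only the constants matter, and they are incompatible: already for $(r,\ell,k_1,k_2)=(3,4,2,1)$ the first forces $T\lesssim 7.5n$ while the second requires $T\gtrsim 24n$, and the gap widens for larger $r,\ell$. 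The crude bound $e(H)\le \Delta(H)\cdot n/r$ discards too much.

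The paper's proof also splits on $\Delta(H)$, and its large-$\Delta$ case coincides with your (b)/(c). The difference is the small-$\Delta$ case: instead of bounding $e(H)$ globally, the paper first checks (using the explicit formula for $\mathrm{ex}_r(n,k_1P_\ell^r)$) that the assumed edge count already exceeds the inductive bound for $k_2-1$ on $n$ vertices, so $H$ itself contains a copy $F'$ of $k_1P_\ell^r\cup(k_2-1)S_\ell^r$. Deleting $V(F')$ costs at most $|V(F')|\cdot\Delta(H)$ edges; with the threshold set at $\Delta(H)<\frac{\binom{n}{r}-\binom{n-k_2}{r}}{(k_1+k_2-1)(\ell(r-1)+1)}$ this leaves more than $\mathrm{ex}_r(n-k_2,k_1P_\ell^r)\ge\mathrm{ex}_r(n-k_2,S_\ell^r)$ edges, producing the missing $S_\ell^r$. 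The key is that removing a \emph{constant-size} vertex set costs $O(\Delta(H))$ edges, not $O(n\cdot\Delta(H))$.

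Your lower-bound construction matches the paper's, and your verification is actually more careful: you correctly note that the extremal $S_\ell^r$-free graph on $V\setminus W$ must also be $P_\ell^r$-free, and propose building it from small design blocks. The paper asserts this step from the inequality $\mathrm{ex}_r(n,S_\ell^r)\le\mathrm{ex}_r(n,P_\ell^r)$ alone, which does not by itself guarantee that an arbitrary extremal $S_\ell^r$-free graph is $P_\ell^r$-free.
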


\section{\normalsize Linear Tur\'{a}n number for linear path}
\ \ \ \
Let $\ell\geq3$ and $H$ be a linear $r$-graph containing a $P_{\ell-1}^r$ with edges $$e_i=\{v_{(i-1)(r-1)+1},\cdots,v_{i(r-1)},v_{i(r-1)+1}\},\ \ i=1,\cdots,\ell-1.$$
Inspired by \cite{AA2}, we call $v_1,\cdots,v_{r-1}$ the left end vertices of $P_{\ell-1}^r$, $v_{(\ell-2)(r-1)+2},\cdots,v_{(\ell-1)(r-1)+1}$ the right end vertices of $P_{\ell-1}^r$, the other vertices of $P_{\ell-1}^r$ the interior vertices of $P_{\ell-1}^r$, and the vertices in $V(H)\backslash V(P_{\ell-1}^r)$ the exterior vertices.

For any left end vertex $u$ of $P_{\ell-1}^r$, let $A_1(u)$ denote the set of edges in $H$ containing the left end vertex $u$, one interior vertex, and $r-2$ exterior vertices. Since $e_1$ contains the interior $v_r$ and all left end vertices,
we have $e_1\notin A_1(v_i)$. Note that the number of interior vertices of $P_{\ell-1}^r$ is $(r-1)(\ell-3)+1$. By the linearity of $H$, we have
\begin{eqnarray*}
|A_1(v_i)|\leq (r-1)(\ell-3)+1-1=(r-1)(\ell-3)
\end{eqnarray*}
for $1\leq i\leq r-1$. Set $A_1=\bigcup_{i=1}^{r-1}A_1(v_i)$. Then
\begin{eqnarray*}
|A_1|=\sum_{i=1}^{r-1}|A_1(v_i)|\leq (r-1)^2(\ell-3)
\end{eqnarray*}
with the equality holds if and only if for any $1\leq i\leq r-1$, $v_i$ and every interior vertex except $v_r$ are contained by some edge of $A_1$.

Similarly, for any right end vertex $v$, let $B_1(v)$ denote the set of edges in $H$ containing the right end vertex $v$, one interior vertex, and $r-2$ exterior vertices. Set $B_1=\bigcup_{j=(\ell-2)(r-1)+2}^{(\ell-1)(r-1)+1}B_1(v_j)$. Then
\begin{eqnarray*}
|B_1|=\sum_{j=(\ell-2)(r-1)+2}^{(\ell-1)(r-1)+1}|B_1(v_j)|\leq (r-1)^2(\ell-3)
\end{eqnarray*}
with the equality holds if and only if for any $(\ell-2)(r-1)+2\leq j\leq (\ell-1)(r-1)+1$, $v_j$ and every interior vertex except $v_{(\ell-2)(r-1)+1}$ are contained by some edge of $B_1$.

Since $A_1\cap B_1=\emptyset$, we have
\begin{eqnarray}
|A_1\cup B_1|=|A_1|+|B_1|=\sum_{i=1}^{r-1}|A_1(v_i)|+\sum_{j=(\ell-2)(r-1)+2}^{(\ell-1)(r-1)+1}|B_1(v_j)|\leq 2(r-1)^2(\ell-3).
\end{eqnarray}

For $f_1\in A_1$ and $f_2\in B_1$, if $v_{i+1}\in f_1$ and $v_{i}\in f_2$, then we call that the edges $f_1$ and $f_2$ traverse the interior vertices $v_i$ and $v_{i+1}$.

In the rest of this section, we always assume that $H$ is a $P_\ell^r$-free linear $r$-graph and $H$ contains a $P_{\ell-1}^r$. Label the vertices and edges of $P_{\ell-1}^r$ as $e_i=\{v_{(i-1)(r-1)+1},\cdots,v_{i(r-1)},v_{i(r-1)+1}\}$ for $i=1,\cdots,\ell-1$.

\begin{lem}
For $\ell\geq4$ and $2\leq i\leq \ell-2$, the following two cases does not occur in $H$.
\begin{kst}
\item[(\romannumeral1)] There exists some $(i-1)(r-1)+2\leq j\leq i(r-1)$ such that $v_j\in f_1\in A_1$ and $v_j\in f_2\in B_1$.
\item[(\romannumeral2)] There exist $f_1\in A_1$ and $f_2\in B_1$ such that $f_1$ and $f_2$ traverse the interior vertices $v_{i(r-1)},v_{i(r-1)+1}$ and $V(f_1)\cap V(f_2)=\emptyset$.
\end{kst}
\end{lem}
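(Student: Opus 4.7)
The plan is to argue by contradiction in both cases, exhibiting an explicit linear path of length $\ell$ in $H$ and thereby contradicting the $P_\ell^r$-freeness. In both constructions I rearrange the edges of the given $P_{\ell-1}^r$ and splice in $f_1, f_2$, deleting the edge $e_i$ so that no ``hot'' vertex (one shared by $f_1$ or $f_2$ with the existing path) appears as an intersection between non-consecutive edges of the new path.

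For part (i), let $v_a \in \{v_1, \ldots, v_{r-1}\}$ and $v_b$ be the endpoints with $f_1 \in A_1(v_a)$ and $f_2 \in B_1(v_b)$. Linearity of $H$ forces $V(f_1) \cap V(f_2) = \{v_j\}$, and the $r-2$ exterior vertices of $f_1, f_2$ lie outside $V(P_{\ell-1}^r)$. I would show that the sequence $e_{i-1}, e_{i-2}, \ldots, e_1, f_1, f_2, e_{\ell-1}, e_{\ell-2}, \ldots, e_{i+1}$ is a $P_\ell^r$. The consecutive intersections are the standard joints of $P_{\ell-1}^r$ augmented by $\{v_a\}$ (between $e_1$ and $f_1$), $\{v_j\}$ (between $f_1$ and $f_2$), and $\{v_b\}$ (between $f_2$ and $e_{\ell-1}$). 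Non-consecutive disjointness follows because $v_a$ appears only in $e_1$ among the original path edges, $v_b$ only in $e_{\ell-1}$, $v_j$ only in $e_i$ (excluded), and the exterior vertices meet none of the $e_s$.

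For part (ii), the key structural observation is that $v_{i(r-1)+1}$ is the shared vertex of $e_i$ and $e_{i+1}$, so $f_1$ meets both $e_i$ and $e_{i+1}$ in exactly $\{v_{i(r-1)+1}\}$; hence any linear path containing $f_1$ can include at most one of $e_i, e_{i+1}$, and that edge must be adjacent to $f_1$. I would drop $e_i$ and verify that $f_2, e_{\ell-1}, e_{\ell-2}, \ldots, e_{i+1}, f_1, e_1, e_2, \ldots, e_{i-1}$ is a $P_\ell^r$ of length $1 + (\ell - 1 - i) + 1 + (i-1) = \ell$. The consecutive joints are $\{v_b\}$, the usual shared vertices inside $e_{\ell-1}, \ldots, e_{i+1}$, then $\{v_{i(r-1)+1}\}$ between $e_{i+1}$ and $f_1$, then $\{v_a\}$ between $f_1$ and $e_1$, and the usual joints inside $e_1, \ldots, e_{i-1}$. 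Non-consecutive disjointness uses $V(f_1) \cap V(f_2) = \emptyset$ together with: $v_a$ meets only $e_1$, $v_b$ only $e_{\ell-1}$, $v_{i(r-1)+1}$ only $e_i, e_{i+1}$ (with $e_i$ excluded), and $v_{i(r-1)}$ only $e_i$ (excluded); non-adjacent edges of $P_{\ell-1}^r$ are automatically vertex-disjoint. The corner cases $i = 2$ and $i = \ell - 2$ degenerate into single-edge tail blocks but require no change in the argument.

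The main obstacle is choosing a splicing that remains linear: a naive insertion of $f_1, f_2$ into the original path, or retaining $e_i$, creates a repeated non-consecutive intersection at $v_j$ in case (i) or at $v_{i(r-1)+1}$ in case (ii). Excluding $e_i$ and placing $f_1, f_2$ symmetrically with respect to the two halves of the original path is the trick that resolves both obstructions; once this layout is fixed, the remaining verification is routine from the membership rules for $v_a, v_b, v_j, v_{i(r-1)}, v_{i(r-1)+1}$ in $e_1, \ldots, e_{\ell-1}$ and the linearity of $P_{\ell-1}^r$.
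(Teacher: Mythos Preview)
Your proposal is correct and takes essentially the same approach as the paper: in both parts you exhibit exactly the same linear path the paper writes down (in part (i) your sequence $e_{i-1},\ldots,e_1,f_1,f_2,e_{\ell-1},\ldots,e_{i+1}$ is just the paper's path $e_{i+1},\ldots,e_{\ell-1},f_2,f_1,e_1,\ldots,e_{i-1}$ read in reverse, and in part (ii) the paths coincide verbatim). The additional intersection checks you spell out are all sound and merely make explicit what the paper leaves to the reader.
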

\begin{proof}[{\bf{Proof}}]
$(\romannumeral1)$. If $v_j\in f_1\in A_1$ and $v_j\in f_2\in B_1$ for some $(i-1)(r-1)+2\leq j\leq i(r-1)$, then by the linearity of $H$, we have $(V(f_1)\backslash \{v_j\})\cap (V(f_2)\backslash \{v_j\})=\emptyset$. Then the edges $e_{i+1},\cdots,e_{\ell-1}$, $f_2$, $f_1$, $e_1,\cdots,e_{i-1}$ form a $P_{\ell}^r$ in $H$, which contradicts
to the assumption that $H$ is $P_{\ell}^r$-free.

$(\romannumeral2)$. If $f_1\in A_1$ and $f_2\in B_1$ traverse the interior vertices $v_{i(r-1)},v_{i(r-1)+1}$ and $V(f_1)\cap V(f_2)=\emptyset$, then the edges $f_2$, $e_{\ell-1},\cdots,e_{i+1}$, $f_1$, $e_1,\cdots,e_{i-1}$ form a $P_\ell^r$ in $H$, which is a contradiction.
\end{proof}

\begin{lem} For $\ell\geq4$ and $2\leq i\leq \ell-2$, suppose that $f_1\in A_1(v_j)$ and $f_2\in B_1(v_k)$ traverse the interior vertices $v_{i(r-1)}$ and $v_{i(r-1)+1}$. Then we have the following results.
\begin{kst}
\item[(\romannumeral1)] There exists a left end vertex $u\neq v_j$ such that $\{u,v_{i(r-1)+1}\}$ is not contained by any edge of $A_1$, and there exists a right end vertex $w\neq v_k$ such that $\{w,v_{i(r-1)}\}$ is not contained by any edge of $B_1$.
\item[(\romannumeral2)] If $r\geq4$, then there exists a right end vertex $w'$ such that $\{w',v_t\}$ is not contained by any edge of $B_1$ for any $(i-1)(r-1)+2\leq t\leq (i-1)(r-1)+r-2$.
\end{kst}
\end{lem}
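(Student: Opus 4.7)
The plan is to follow the pattern of Lemma 2.1: in each part I will assume the desired conclusion fails and then produce a copy of $P_\ell^r$ in $H$, contradicting the standing $P_\ell^r$-free hypothesis. The basic rerouting I will reuse is the one already used in the proof of Lemma 2.1(ii): whenever an $A_1$-edge $f'_1$ (containing $v_{i(r-1)+1}$) and a $B_1$-edge $f'_2$ (containing $v_{i(r-1)}$) traverse $v_{i(r-1)}, v_{i(r-1)+1}$ with disjoint vertex sets, $f'_2, e_{\ell-1}, \ldots, e_{i+1}, f'_1, e_1, \ldots, e_{i-1}$ is a $P_\ell^r$. A similar check shows that for any $B_1$-edge $g$ whose unique interior vertex lies in $e_i$, the sequence $g, e_{\ell-1}, \ldots, e_{i+1}, f_1, e_1, \ldots, e_{i-1}$ is a $P_\ell^r$ whenever $V(g) \cap V(f_1) = \emptyset$, and dually for $A_1$-edges.

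For part (i) the two sub-statements are symmetric, so I describe only the left end case. I would assume for contradiction that every left end vertex $u \neq v_j$ admits an edge $f_u \in A_1(u)$ containing $v_{i(r-1)+1}$, giving $r-1$ distinct $A_1$-edges (including $f_1$) through $v_{i(r-1)+1}$. By linearity they pairwise meet only at that vertex, so their $r-2$ exterior vertex sets are pairwise disjoint. Lemma 2.1(ii) first forces $V(f_1) \cap V(f_2) \neq \emptyset$, and replaying the rerouting with each $f_u$ in place of $f_1$ forces $V(f_u) \cap V(f_2) \neq \emptyset$ as well. Since these intersections cannot involve left or right end vertices or the distinguished interior vertices $v_{i(r-1)+1}, v_{i(r-1)}$, each of the $r-1$ edges must hit $V(f_2)$ at a distinct exterior vertex of $f_2$; but $f_2$ has only $r-2$ exterior vertices, which is the desired contradiction. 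The right end case is dual, with $f_1$ (and its $r-2$ exterior vertices) now playing the role that $f_2$ played above.

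For part (ii) I would fix $t$ in the given range and call a right end vertex $w$ a \emph{$t$-bad} vertex if some $g_w \in B_1(w)$ contains $v_t$. For any $t$-bad $w$, the rerouting above forces $V(g_w) \cap V(f_1) \neq \emptyset$, so $g_w$ must share exactly one exterior vertex with $f_1$. Any two such edges $g_w, g_{w'}$ containing $v_t$ pairwise meet only in $\{v_t\}$ by linearity, so they use distinct exterior vertices of $f_1$; this bounds the number of $t$-bad vertices by the number of exterior vertices of $f_1$, namely $r-2 < r-1$. Hence some right end vertex $w'$ has $\{w', v_t\}$ not contained in any edge of $B_1$.

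The main obstacle I anticipate is the quantifier ordering in (ii): the conclusion as stated asks for a single $w'$ that works for every $t$ in the range simultaneously, not one $w'$ per $t$. My per-$t$ bound $|W_t| \leq r-2$ only yields $|\bigcup_t W_t| \leq (r-3)(r-2)$, which can exceed $r-1$ once $r \geq 5$. To push through the simultaneous version I would exploit the extra linearity constraint that any two $B_1$-edges sharing the same right end vertex must also use distinct exterior vertices of $f_1$, and then combine two bad edges $g_w, g_{w'}$ (or one such edge with $f_2$) that share an exterior vertex of $f_1$ to extract a second $P_\ell^r$; this refined pigeonhole bookkeeping is where I expect the real difficulty to lie.
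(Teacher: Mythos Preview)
Your arguments for parts (i) and (ii) are correct and essentially identical to the paper's: in (i) you pigeonhole $r-1$ $A_1$-edges through $v_{i(r-1)+1}$ against the $r-2$ exterior vertices of $f_2$, and in (ii) you pigeonhole the putative $B_1$-edges through $v_t$ against the $r-2$ exterior vertices of $f_1$, in each case extracting a pair disjoint from the other and invoking the Lemma~2.1(ii) rerouting. This is exactly what the paper does (it phrases (i) with the $r-2$ edges $g_1,\dots,g_{r-2}$ against $|V(f_2)\setminus\{v_k,v_{i(r-1)},v'\}|=r-3$, but the count is equivalent to yours once $f_1$ and its intersection point $v'$ are folded in).

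Your anticipated ``main obstacle'' in (ii) is not an obstacle. Despite the ambiguous wording of the statement, the paper's own proof fixes $t$ first and then produces $w'$ for that particular $t$; it never attempts to find a single $w'$ working for all $t$ simultaneously. The application in Lemma~2.3 (Subcase~2.2, case $f\in B_1$) only needs one excluded pair $\{w',v_t\}$ for each of the $r-3$ values of $t$, and these are automatically distinct because the $v_t$ are distinct. So your per-$t$ bound $|W_t|\le r-2<r-1$ already finishes the job, and the ``refined pigeonhole bookkeeping'' you sketch at the end is unnecessary.
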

\begin{proof}[{\bf{Proof}}]
$(\romannumeral1)$. By Lemma 2.1$(\romannumeral2)$, we have $V(f_1)\cap V(f_2)\neq \emptyset$. Set $V(f_1)\cap V(f_2)=\{v'\}$, then $\{v_j,v_{i(r-1)+1},v'\}\subseteq V(f_1)$ and $\{v_k,v_{i(r-1)},v'\}\subseteq V(f_2)$. Suppose to the contrary that for each $1\leq s\leq r-1$ and $s\neq j$, $\{v_s,v_{i(r-1)+1}\}$ is contained by some edge of $A_1$. Denote these edges by $g_1,\cdots,g_{r-2}$. By the linearity of $H$, $V(g_1)\backslash\{v_{i(r-1)+1}\}$, $\cdots,V(g_{r-2})\backslash\{v_{i(r-1)+1}\}$ are pairwise disjoint.
Since $|V(f_2)\backslash\{v_k,v_{i(r-1)},v'\}|=r-3$, by pigeonhole principle, there exists some $1\leq t\leq r-2$ such that $\big(V(g_t)\backslash\{v_{i(r-1)+1}\}\big)\cap \big(V(f_2)\backslash\{v_k,v_{i(r-1)},v'\}\big)=\emptyset$. Then $V(g_t)\cap V(f_2)=\emptyset$. Note that $g_t$ and $f_2$ traverse the interior vertices $v_{i(r-1)},v_{i(r-1)+1}$, which contradicts to Lemma 2.1$(\romannumeral2)$.

Similarly, we may prove that there exists a right end vertex $w\neq v_k$ such that $\{w,v_{i(r-1)}\}$ is not contained by any edge of $B_1$.

$(\romannumeral2)$. Let $r\geq4$ and $(i-1)(r-1)+2\leq t\leq (i-1)(r-1)+r-2$.  Suppose to the contrary that for each $(\ell-2)(r-1)+2\leq s\leq (\ell-1)(r-1)+1$, $\{v_s,v_t\}$ is contained by some edge of $B_1$. Denote these edges by $h_1,\cdots,h_{r-1}$. By the linearity of $H$, $V(h_1)\backslash\{v_t\},\cdots,V(h_{r-1})\backslash\{v_t\}$ are pairwise disjoint. Since $|V(f_1)\backslash\{v_j,v_{i(r-1)+1}\}|=r-2$, by pigeonhole principle, there exists some $1\leq z\leq r-1$ such that $\big(V(h_z)\backslash\{v_t\}\big)\cap \big(V(f_1)\backslash\{v_j,v_{i(r-1)+1}\}\big)=\emptyset$. Then $V(h_z)\cap V(f_1)=\emptyset$. So the edges $h_z$, $e_{\ell-1},\cdots,e_{i+1}$, $f_1$, $e_1,\cdots,e_{i-1}$ form a $P_\ell^r$ in $H$, which is a contradiction.
\end{proof}

\begin{lem} For $\ell\geq4$, there exists a left end vertex $u$ and a right end vertex $v$ of $P_{\ell-1}^r$ such that $|A_1(u)|+|B_1(v)|\leq 2(r-2)(\ell-3)$.
\end{lem}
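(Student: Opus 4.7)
The plan is to bound $|A_1|+|B_1|$ and then apply pigeonhole. Since each edge of $A_1$ contains exactly one left-end vertex and each edge of $B_1$ exactly one right-end vertex, $\sum_u |A_1(u)|=|A_1|$ and $\sum_v |B_1(v)|=|B_1|$, so
\[
\min_u |A_1(u)|+\min_v |B_1(v)| \le \frac{|A_1|+|B_1|}{r-1}.
\]
It therefore suffices to prove $|A_1|+|B_1|\le 2(r-1)(r-2)(\ell-3)$ and pick $u,v$ achieving the minima.

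I would classify each interior vertex of $P_{\ell-1}^r$ as \emph{middle} (lying inside some $e_i$ with $2 \le i \le \ell-2$) or \emph{shared} (of the form $v_{i(r-1)+1}$). Writing $|A_1^{(v)}|,|B_1^{(v)}|$ for the numbers of $A_1$- and $B_1$-edges through $v$, linearity gives each $\le r-1$, and Lemma 2.1(i) further forces one of them to vanish when $v$ is middle, hence $|A_1^{(v)}|+|B_1^{(v)}|\le r-1$ in that case. Summing over the $(r-2)(\ell-3)$ middle vertices contributes at most $(r-1)(r-2)(\ell-3)$ to $|A_1|+|B_1|$; the trivial bound $|A_1^{(v)}|,|B_1^{(v)}| \le r-1$ on the $\le \ell-3$ relevant shared vertices in each of $I_A,I_B$ adds at most $2(r-1)(\ell-3)$. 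Hence $|A_1|+|B_1|\le r(r-1)(\ell-3)$, which is at most $2(r-1)(r-2)(\ell-3)$ exactly when $r\ge 4$, and the lemma follows by averaging in that range.

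The main obstacle is the case $r=3$, where the averaging only gives $3(\ell-3)$ versus the required $2(\ell-3)$. To close the gap I would sharpen the shared-vertex bound using Lemma 2.2(i): whenever a shared $v_{2i+1}$ lies on an $A_1$-edge and $v_{2i}$ lies on a $B_1$-edge (a traversing configuration), Lemma 2.2(i) forces $|A_1^{(v_{2i+1})}|\le 1$ and $|B_1^{(v_{2i})}|\le 1$, and a symmetric $P_\ell^r$-construction at the adjacent pair $v_{2i+1},v_{2i+2}$ gives an analogous bound. The extreme shared vertices $v_3$ and $v_{2\ell-5}$ (each in only one of $I_A,I_B$) are handled by a direct construction: assuming both $\{v_s,v_{2\ell-3},x_s\}$ lie in $A_1$ for $s=1,2$, any hypothetical $B_1$-edge through $v_3$ combined with one of these edges yields a linear $P_\ell^r$ unless the two exterior vertices $x_s$ coincide, which contradicts linearity of $H$. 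Aggregating these refinements gives $|A_1|+|B_1|\le 4(\ell-3)$ for $r=3$, completing the proof.
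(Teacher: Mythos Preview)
Your reduction via averaging to the inequality $|A_1|+|B_1|\le 2(r-1)(r-2)(\ell-3)$ is exactly what the paper does.  Where you diverge is in how you bound $|A_1|+|B_1|$: the paper argues, for each $2\le i\le \ell-2$, that at least $2(r-1)$ of the $A_1/B_1$-edges attached to the vertices of $e_i$ must be ``excluded'' (via a case analysis using Lemmas~2.1 and~2.2), and then subtracts these $2(r-1)(\ell-3)$ excluded edges from the crude bound $2(r-1)^2(\ell-3)$.  Your per-vertex accounting --- splitting interior vertices into middle and shared, and using Lemma~2.1(i) to force $|A_1^{(v)}|+|B_1^{(v)}|\le r-1$ on each middle vertex --- is a genuinely different and quicker route that does give $|A_1|+|B_1|\le r(r-1)(\ell-3)$ and hence the lemma for all $r\ge 4$.

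The $r=3$ case, however, has real gaps.  First, the ``symmetric $P_\ell^r$-construction at the adjacent pair $v_{2i+1},v_{2i+2}$'' you invoke is not Lemma~2.2(i) and is never stated or proved; you would need to establish the mirror of Lemma~2.1(ii) (namely: if $v_{2i-1}\in f_2\in B_1$, $v_{2i}\in f_1\in A_1$ and $V(f_1)\cap V(f_2)=\emptyset$, then $f_1,e_1,\dots,e_{i-1},f_2,e_{\ell-1},\dots,e_{i+1}$ is a $P_\ell^3$) and the corresponding analogue of Lemma~2.2(i).  Second, your ``extreme'' discussion misidentifies the vertex ($v_{2\ell-5}$ lies in both $I_A$ and $I_B$; the vertex in $I_A\setminus I_B$ is $v_{2\ell-3}$), and the proposed $P_\ell$-construction does not actually close: the path $\{v_s,v_{2\ell-3},x_s\},e_{\ell-2},\dots,e_2,\{v_t,v_3,y\}$ has only $\ell-1$ edges, and any attempt to append $e_1$ or $e_{\ell-1}$ re-uses $v_3$ or $v_{2\ell-3}$.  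If instead you group contributions by edge $e_i$ (so that for each $i$ you bound $|A_1^{(v_{2i})}|+|B_1^{(v_{2i})}|+|A_1^{(v_{2i+1})}|+|B_1^{(v_{2i-1})}|\le 4$, combining Lemma~2.1(i) at $v_{2i}$ with Lemma~2.2(i) or its mirror as appropriate), the argument goes through cleanly without any separate treatment of ``extreme'' shared vertices --- but that is not what you wrote.
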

\begin{proof}[{\bf{Proof}}]
Label the vertices and edges of $P_{\ell-1}^r$ in $H$ as $e_i=\{v_{(i-1)(r-1)+1},\cdots,v_{i(r-1)},v_{i(r-1)+1}\}$ for $i=1,\cdots,\ell-1$. For the upper bound on $|A_1|+|B_1|$ in (2.1), all possible edges are counted. In fact, some of these edges do not exist, which will be called ``excluded edges''. We will distinguish into two cases to prove that for any fixed $2\leq i\leq \ell-2$, there are $2(r-1)$ excluded edges. 

{\bf{Case 1.}} $v_{i(r-1)}\notin f$ for any $f\in A_1\cup B_1$.

By the definition of $A_1$ and $B_1$, $\{v_j,v_{i(r-1)}\}$ is not contained by any edge of $A_1\cup B_1$ for $j=1,\cdots,r-1,(\ell-2)(r-1)+2,\cdots,(\ell-1)(r-1)+1$. Thus, we have $2(r-1)$ excluded edges in this case.

{\bf{Case 2.}} There exists an edge $f\in A_1\cup B_1$ such that $v_{i(r-1)}\in f$.

Set $v'=v_{(i-1)(r-1)+r-2}$ if $f\in A_1$ and $v'=v_{i(r-1)+1}$ if $f\in B_1$. We consider two cases as follows.

{\bf{Subcase 2.1.}} There is no edge $f'\in A_1\cup B_1$ such that $f$ and $f'$ traverse the interior vertices $v_{i(r-1)}$ and $v'$.

If $f\in A_1$, from the assumption of the subcase, we know that $\{v_j,v'\}$ is not contained by any edge of $B_1$ for any $(\ell-2)(r-1)+2\leq j\leq (\ell-1)(r-1)+1$. Since $v_{i(r-1)}\in f$, by Lemma 2.1$(\romannumeral1)$, $\{v_j,v_{i(r-1)}\}$ is not contained by any edge of $B_1$ for any $(\ell-2)(r-1)+2\leq j\leq (\ell-1)(r-1)+1$. Note that these edges are different. Thus, we have $2(r-1)$ excluded edges.

Similarly, if $f\in B_1$, we also have $2(r-1)$ excluded edges.

{\bf{Subcase 2.2.}} There exists an edge $f'\in A_1\cup B_1$ such that $f$ and $f'$ traverse the interior vertices $v_{i(r-1)}$ and $v'$.

If $f\in A_1$, then $v'=v_{(i-1)(r-1)+r-2}\in f'\in B_1$. Since $v_{i(r-1)}\in f$, by Lemma 2.1$(\romannumeral1)$, $\{v_k,v_{i(r-1)}\}$ is not contained by any edge of $B_1$ for any $(\ell-2)(r-1)+2\leq k\leq (\ell-1)(r-1)+1$. So we have $r-1$ excluded edges.
On the other hand, if $r\geq 4$, then $v'$ is exactly a vertex of degree 1 in the linear path. By Lemma 2.1$(\romannumeral1)$, $\{v_j,v'\}$ is not contained by any edge of $A_1$ for any $1\leq j\leq r-1$. So we have $r-1$ excluded edges. If $r=3$, then $v'$ is exactly a vertex of degree 2 in the linear path. 
By reversing the roles of the left and right end vertices, we may obtain a result similar to Lemma 2.2$(\romannumeral1)$, namely, we have $2$ excluded edges. Note that $A_1\cap B_1=\emptyset$. Thus, we have $2(r-1)$ excluded edges.

If $f\in B_1$, then $v'=v_{i(r-1)+1}\in f'$. By Lemma 2.1$(\romannumeral1)$, $\{v_s,v_{i(r-1)}\}$ is not contained by any edge of $A_1$ for any $1\leq s\leq r-1$. Moreover, we have at least $2$ excluded edges by Lemma 2.2$(\romannumeral1)$, and have at least $r-3$ excluded edges by Lemma 2.2$(\romannumeral2)$. Note that these edges are different. Thus, we have at least $2(r-1)$ excluded edges.

In total, there are at least $2(r-1)(\ell-3)$ excluded edges.
Then we get
\begin{eqnarray*}
|A_1|+|B_1|&=& \sum_{i=1}^{r-1}|A_1(v_i)|+\sum_{j=(\ell-2)(r-1)+2}^{(\ell-1)(r-1)+1}|B_1(v_j)| \\
&=& \Big(|A_1(v_1)|+|B_1(v_{(\ell-2)(r-1)+2})|\Big)+\cdots+\Big(|A_1(v_{r-1})|+|B_1(v_{(\ell-1)(r-1)+1})|\Big) \\
&\leq& 2(r-1)^2(\ell-3)-2(r-1)(\ell-3) \\
&=& 2(r-1)(r-2)(\ell-3).
\end{eqnarray*}
By pigeonhole principle, there exists some $1\leq i\leq r-1$ such that $|A_1(v_i)|+|B_1(v_{(\ell-2)(r-1)+i+1})|\leq 2(r-2)(\ell-3)$. This completes the proof.
\end{proof}

\begin{proof}[{\bf{Proof of Theorem 1.1.}}]
Suppose to the contrary that $H$ is a $P_\ell^r$-free linear $r$-graph on $n$ vertices with more than $\frac{(2r-3)\ell}{2}n$ edges. We may assume that $n$ and $\ell$ are minimal.
If there exists a vertex $u$ such that $d_H(u)\leq\frac{(2r-3)\ell}{2}$, then $|E(H-\{u\})|>\frac{(2r-3)\ell}{2}(n-1)$. Since $H-\{u\}$ is $P_\ell^r$-free on $n-1$ vertices, $n$ is not minimal, which is a contradiction. So $\delta(H)>\frac{(2r-3)\ell}{2}$.

By the minimality of $\ell$, $H$ contains a $P_{\ell-1}^r$. Label the vertices and edges of $P_{\ell-1}^r$ in $H$ as $e_i=\{v_{(i-1)(r-1)+1},\cdots,v_{i(r-1)},v_{i(r-1)+1}\}$ for $i=1,\cdots,\ell-1$.
By Lemma 2.3, without loss of generality, we may set
\begin{eqnarray}
|A_1(v_1)|+|B_1(v_{(\ell-1)(r-1)+1})|\leq 2(r-2)(\ell-3).
\end{eqnarray}

For $2\leq k\leq r-1$, denote by $A_k(u)$ the set of edges in $H$ containing the left end vertex $u$ of $P_{\ell-1}^r$ and intersecting $V(P_{\ell-1}^r)\backslash\{u\}$ in $k$ vertices. Since $H$ is linear, each vertex in $V(P_{\ell-1}^r)\backslash\{u\}$ belongs to at most one edge in $\bigcup_{k=1}^{r-1}A_k(u)$. Note that $|V(P_{\ell-1}^r)|=(r-1)(\ell-1)+1$. Then
\begin{eqnarray}
\sum_{k=1}^{r-1}k|A_k(v_1)|\leq (r-1)(\ell-1).
\end{eqnarray}

Similarly, for $2\leq k\leq r-1$, denote by $B_k(v)$ the set of edges in $H$ containing the right end vertex $v$ and intersecting $V(P_{\ell-1}^r)\backslash\{v\}$ in $k$ vertices. Then we have  
\begin{eqnarray}
\sum_{k=1}^{r-1}k|B_k(v_{(\ell-1)(r-1)+1})|\leq (r-1)(\ell-1).
\end{eqnarray}

Combining (2.2)-(2.4), we get
\begin{eqnarray*}
&{}&|A_1(v_1)|+\sum_{k=1}^{r-1}k|A_k(v_1)|+|B_1(v_{(\ell-1)(r-1)+1})|+\sum_{k=1}^{r-1}k|B_k(v_{(\ell-1)(r-1)+1})| \\
&=& 2|A_1(v_1)|+\sum_{k=2}^{r-1}k|A_k(v_1)|+2|B_1(v_{(\ell-1)(r-1)+1})|+\sum_{k=2}^{r-1}k|B_k(v_{(\ell-1)(r-1)+1})| \\
&\leq& 2(r-2)(\ell-3)+2(r-1)(\ell-1),
\end{eqnarray*}
that is
\begin{eqnarray*}
|A_1(v_1)|+\sum_{k=2}^{r-1}\frac{k}{2}|A_k(v_1)|+|B_1(v_{(\ell-1)(r-1)+1})|+\sum_{k=2}^{r-1}\frac{k}{2}|B_k(v_{(\ell-1)(r-1)+1})| \\
\leq (r-2)(\ell-3)+(r-1)(\ell-1).
\end{eqnarray*}
By pigeonhole principle, then either
\begin{eqnarray*}
|A_1(v_1)|+\sum_{k=2}^{r-1}\frac{k}{2}|A_k(v_1)|\leq \frac{(r-2)(\ell-3)+(r-1)(\ell-1)}{2}
\end{eqnarray*}
or
\begin{eqnarray*}
|B_1(v_{(\ell-1)(r-1)+1})|+\sum_{k=2}^{r-1}\frac{k}{2}|B_k(v_{(\ell-1)(r-1)+1})|\leq \frac{(r-2)(\ell-3)+(r-1)(\ell-1)}{2}.
\end{eqnarray*}

Now let us consider the degree of $v_1$ and $v_{(\ell-1)(r-1)+1}$ in $H$. Note that $A_1(v_1)$ is the set of edges in $H$ containing $v_1$, one interior vertex and $r-2$ exterior vertices, and the number of edges in $H$ containing $v_1$, one right end vertex and $r-2$ exterior vertices is at most $r-1$.
Thus, the number of edges in $H$ containing $v_1$ and intersecting $V(P_{\ell-1}^r)\backslash\{v_1\}$ in one vertex is at most $|A_1(v_1)|+r-1$. Furthermore, there is no edge $f$ in $H$ such that $V(f)\cap V(P_{\ell-1}^r)=\{v_1\}$. Otherwise, the edges $f$, $e_1,\cdots,e_{\ell-1}$ form a $P_\ell^r$ in $H$, which is a contradiction. Thus, $d_H(v_1)\leq \sum_{k=1}^{r-1}|A_k(v_1)|+r-1$. Similarly, we have $d_H(v_{(\ell-1)(r-1)+1})\leq \sum_{k=1}^{r-1}|B_k(v_{(\ell-1)(r-1)+1})|+r-1$.
Thus, we have either
\begin{eqnarray*}
d_H(v_1)&\leq& \sum_{k=1}^{r-1}|A_k(v_1)|+r-1 \\
&\leq& |A_1(v_1)|+\sum_{k=2}^{r-1}\frac{k}{2}|A_k(v_1)|+r-1 \\
&\leq& \frac{(r-2)(\ell-3)+(r-1)(\ell-1)}{2}+r-1 \\
&<& \frac{(2r-3)\ell}{2}
\end{eqnarray*}
or
\begin{eqnarray*}
d_H(v_{(\ell-1)(r-1)+1})&\leq& \sum_{k=1}^{r-1}|B_k(v_{(\ell-1)(r-1)+1})|+r-1 \\
&\leq& |B_1(v_{(\ell-1)(r-1)+1})|+\sum_{k=2}^{r-1}\frac{k}{2}|B_k(v_{(\ell-1)(r-1)+1})|+r-1 \\
&\leq& \frac{(r-2)(\ell-3)+(r-1)(\ell-1)}{2}+r-1 \\
&<& \frac{(2r-3)\ell}{2},
\end{eqnarray*}
which is a contradiction to the fact $\delta(H)>\frac{(2r-3)\ell}{2}$. This completes the proof.
\end{proof}

Note that $\ell\geq4$ in Theorem 1.1. For $\ell=2$, it is easy to see that ${\rm{ex}}_r^{\rm{lin}}(n,P_2^r)=\lfloor\frac{n}{r}\rfloor$. For $\ell=3$, we have the following result.
\begin{lem}
For $r\geq3$, we have ${\rm{ex}}_r^{\rm{lin}}(n,P_3^r)\leq n$.
\end{lem}
\begin{proof}[{\bf{Proof.}}]
Let $H$ be a $P_3^r$-free linear $r$-graph on $n$ vertices with more than $n$ edges. We may assume that $n$ is minimal.
If there exists a vertex $u$ such that $d_H(u)<2$, then $|E(H-\{u\})|>n-1$. Since $H-\{u\}$ is $P_3^r$-free on $n-1$ vertices, $n$ is not minimal, which is a contradiction. So $\delta(H)\geq2$.
If $\Delta(H)\leq r$, then $\sum_{v\in V(H)}d_H(v)\leq nr$. Thus $|E(H)|\leq n$, which is a contradiction. So there exists a vertex $w$ such that $d_H(w)=\Delta(H)\geq r+1$.
Take a star $S_{r+1}^r$ with center $w$ and a vertex $w'\in V(S_{r+1}^r)\backslash\{w\}$. Since $d_H(w')\geq2$ and $H$ is linear, there is an edge $e$ such that $w'\in e$ and $w\notin e$. Note that there exists an edge $e'\in E(S_{r+1}^r)$ such that $V(e')\cap V(e)=\emptyset$. Then $e$, the edge containing the vertices $w,w'$ and $e'$ form a $P_3^r$, which is a contradiction.
\end{proof}

\section{\normalsize Construction of some linear $r$-graphs}
\ \ \ \
In this section, we introduce two special classes of linear $r$-graphs, one is a combinational design and the other is the integer lattice.
By some specific linear $r$-graphs, which constructed based on these two classes of linear $r$-graphs and Cartesian product of hypergraphs, we obtain a lower bound for the linear Tur\'{a}n number of the linear path and a kind of linear star-path forests.

\begin{defi}[\cite{AA1}]
A pairwise balanced design of index $\lambda$ (in brief, a $\lambda$-PBD) is a pair $(X,\mathcal{A})$ where $X$ is a set of points, $\mathcal{A}$ is a family of subsets of $X$ (called blocks), each $E\in \mathcal{A}$ containing at least two points, and such that for any pair $x,y$ of distinct points, there are exactly $\lambda$ blocks containing both.
\end{defi}

\begin{defi}[\cite{AA1}]
A $\lambda$-PBD on $n$ points in which all blocks have the same cardinality $r$ is traditionally called a $(n,r,\lambda)$-BIBD (balanced incomplete block design), or a $2$-$(n,r,\lambda)$ design.
\end{defi}

Note that a 2-$(n,r,\lambda)$ design corresponds to a $r$-graph on $n$ vertices, where a block corresponds to an edge of the $r$-graph. In particular, a 2-$(n,r,1)$ design corresponds to a linear $r$-graph on $n$ vertices in which any two vertices are contained in exactly one edge. Throughout this paper, we will not distinguish the 2-$(n,r,1)$ design and the linear $r$-graph corresponding to the design. Denote the 2-$(n,r,1)$ design by $D_{n,r}$. The Fano plane is a $D_{7,3}$. Theorem 3.3 is an existence theory for $D_{n,r}$.

\begin{thm}[\cite{AA1}]
Given positive integer $r$, the $D_{n,r}$ exists for all sufficiently large integers $n$ whose congruence
\begin{eqnarray}
(n-1)\equiv0 ({\rm{mod}}\ r-1)\ \ \ {\rm{and}}\ \ \ n(n-1)\equiv0 ({\rm{mod}}\ r(r-1))
\end{eqnarray}
holds.
\end{thm}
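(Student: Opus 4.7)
The plan is to treat the two directions separately: the congruences (4.1) are elementary necessary conditions on the parameters, while their sufficiency for all sufficiently large admissible $n$ is Wilson's deep asymptotic existence theorem.

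First I would establish necessity by double counting. Let $(X,\mathcal{A})$ be a $2$-$(n,r,1)$ design with $|\mathcal{A}|=b$. Fixing a point $x\in X$, the blocks through $x$ partition $X\setminus\{x\}$ into sets of size $r-1$, so the replication number $k:=(n-1)/(r-1)$ must be an integer, yielding $(n-1)\equiv 0 \pmod{r-1}$. Double counting the incidences $(\{x,y\},E)$ with $\{x,y\}\subseteq E\in\mathcal{A}$ then gives $b\binom{r}{2}=\binom{n}{2}$, which forces $n(n-1)\equiv 0 \pmod{r(r-1)}$.

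For sufficiency I would follow Wilson's three-step scheme. First, assemble a seed family of concrete $2$-$(n,r,1)$ designs from finite affine and projective geometries ($\mathrm{AG}(d,q)$ and $\mathrm{PG}(d,q)$ for suitable prime powers $q$) together with other algebraic constructions. Second, introduce group divisible designs (GDDs) and transversal designs (TDs) of block size $r$, and establish that TDs of all sufficiently large orders exist, independently of any divisibility constraint. Third, apply Wilson's fundamental construction: inflate a pairwise balanced design by replacing each block of size $s$ with a GDD whose block size is $r$ and whose group sizes are dictated by the seed designs, thereby producing a design on a much larger vertex set. Iterating this inflation yields an $r$-PBD closure which, by a careful induction on $n$, eventually contains every $n\ge n_0(r)$ satisfying (4.1).

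The main obstacle lies in the third step: one must control the arithmetic of the inflation so that every residue class permitted by (4.1) is in fact realized by the closure, rather than being missed for infinitely many $n$. This is precisely where Wilson's asymptotic machinery is required, and a complete proof goes substantially beyond the elementary counting of the necessity step. Since the authors import this statement from \cite{AA1} and use it only as a black-box tool for the constructions in Section 4, the sketch above describes the correct strategy without reproducing the full inductive engine.
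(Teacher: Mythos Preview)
The paper does not prove this statement at all: Theorem~4.3 is quoted verbatim with the citation \cite{AA1} and used purely as a black box in Section~4, exactly as you yourself observe in your final paragraph. There is therefore nothing in the paper to compare your sketch against; your outline of Wilson's asymptotic existence argument (necessity by double counting, seed designs from geometries, GDD/TD machinery, and PBD-closure via the fundamental construction) is a faithful high-level description of the original source, but it goes well beyond anything the present authors attempt or need.
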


\begin{prop}[\cite{AA1}]
The quotients $\frac{n-1}{r-1}$ and $\frac{n(n-1)}{r(r-1)}$ are, respectively, the number of blocks on a point and the total number of blocks in $D_{n,r}$.
\end{prop}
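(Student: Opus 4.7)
The plan is to prove both identities by double counting, which is the standard technique for this kind of parameter relation in design theory.

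First I would compute the replication number $r_x$, i.e., the number of blocks through a fixed point $x$. I would count in two ways the set of ordered pairs $(y,B)$ where $y$ is a point distinct from $x$ and $B$ is a block containing both $x$ and $y$. Counting by choice of $y$ first, the defining property of a $2$-$(n,r,\lambda)$ design gives exactly $\lambda$ blocks through any pair, so the total is $\lambda(n-1)$. Counting by choice of $B$ first, every block through $x$ contributes the $r-1$ other points of that block, so the total is $r_x(r-1)$. Equating yields $r_x=\lambda(n-1)/(r-1)$, independent of $x$, which simultaneously proves that every point lies in the same number of blocks and identifies that number.

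Next I would compute the total number of blocks $b$ by a second double counting, this time on incidences, i.e., pairs $(x,B)$ with $x\in B$. Counting by points gives $n\cdot r_x = n\lambda(n-1)/(r-1)$ using the formula just derived, and counting by blocks gives $b\cdot r$ since every block has exactly $r$ points. Equating yields $b=\lambda n(n-1)/(r(r-1))$, as claimed.

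No real obstacle is expected: the whole argument is two applications of double counting, and the only subtlety worth a sentence is to emphasize that the first count already gives that $r_x$ does not depend on $x$, which is what legitimately lets the second count multiply by $n$ in the step $\sum_x r_x = n r_x$. The divisibility conditions appearing in Theorem 4.3, namely $(r-1)\mid\lambda(n-1)$ and $r(r-1)\mid\lambda n(n-1)$, are then immediate necessary conditions for the existence of a $2$-$(n,r,\lambda)$ design, since $r_x$ and $b$ must be nonnegative integers.
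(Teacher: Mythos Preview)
Your double-counting argument is correct and is the standard proof of these basic identities in design theory. The paper itself does not supply a proof of Proposition~4.4; it merely quotes the result from Wilson~\cite{AA1}, so there is nothing to compare against beyond noting that your argument is exactly what one would expect to find in any combinatorics text.
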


\begin{thm}
Let integers $c>0$, $r\geq2$, $\ell\geq2$ and $\ell,r$ satisfy $(\ell-1)\big((\ell-1)(r-1)+1\big)\equiv0 ({\rm{mod}}\ r)$. If $\ell$ is large and $n=c\big((\ell-1)(r-1)+1\big)$, then
\begin{eqnarray*}
{\rm{ex}}_r^{\rm{lin}}(n,P_\ell^r)\geq \frac{(\ell-1)n}{r}.
\end{eqnarray*}
\end{thm}
\begin{proof}[{\bf{Proof}}]
Since $\ell$ is large and $(\ell-1)(r-1)+1,r$ satisfy (3.1), by Theorem 3.3, the linear $r$-graph $D_{(\ell-1)(r-1)+1,r}$ exists.
Let $\widetilde{H}=cD_{(\ell-1)(r-1)+1,r}$. Since $|V(D_{(\ell-1)(r-1)+1,r})|<|V(P_\ell^r)|=\ell(r-1)+1$, $D_{(\ell-1)(r-1)+1,r}$ is $P_\ell^r$-free. Thus $\widetilde{H}$ is $P_\ell^r$-free.
By Proposition 3.4, we have
\begin{eqnarray*}
|E(\widetilde{H})|=c\cdot \frac{\big((\ell-1)(r-1)+1\big)(\ell-1)(r-1)}{r(r-1)}=\frac{(\ell-1)n}{r}.
\end{eqnarray*}
\end{proof}

Now we introduce a $d$-regular linear $r$-graph on $r^d$ vertices formed by $d$-tuples from $\{1,2,\cdots,r\}$: the collection of $d$-tuples that are fixed in all but one coordinate forms an edge. This linear $r$-graph is called integer lattice and is denoted as $[r]^d$ in \cite{D4}. Note that $[r]^d$ has $d\cdot r^{d-1}$ edges, and
these edges may be partitioned into $d$ classes each of which forms a matching of size $r^{d-1}$. This also gives a proper edge-coloring of $[r]^d$. In Figure 1 we show an example of $[4]^3$, where each edge of $[4]^3$ is represented by a line segment.

\begin{figure}[h]
  \centering
  \includegraphics[scale=1.2]{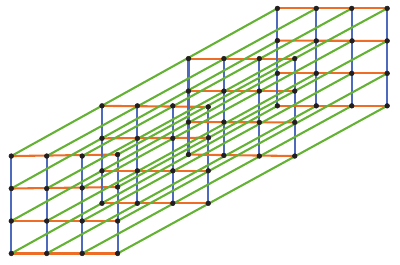}\\
  \caption{\small{Hypergraph $[4]^3$.}}\label{fig2}
\end{figure}

Khormali and Palmer \cite{D4} introduced the Cartesian product of hypergraphs when studying the Tur\'{a}n problem.
For two hypergraphs $H$ and $G$, the Cartesian product $H\square G$ of $H$ and $G$ is the hypergraph with the vertex set $V(H)\times V(G)$ and the edge set
$$E(H\square G)=\big\{e\times\{u\}\,|\,e\in E(H),u\in V(G)\big\}\cup \big\{\{v\}\times e'\,|\,e'\in E(G),v\in V(H)\big\}.$$
By the definition of $H\square G$, we have $|V(H\square G)|=|V(H)|\cdot|V(G)|$ and $|E(H\square G)|=|E(H)|\cdot|V(G)|+|E(G)|\cdot|V(H)|$.

Note that if $H$ is an $r_1$-graph and $G$ is an $r_2$-graph, then the edges in $\big\{e\times\{u\}\,|\,e\in E(H),u\in V(G)\big\}$ are of order $r_1$ and the edges in $\big\{\{v\}\times e'\,|\,e'\in E(G),v\in V(H)\big\}$ are of order $r_2$. Moreover, if $H$ and $G$ are linear, then $H\square G$ is linear.

Now we will construct a linear $r$-graph based on the Cartesian product of a $D_{k,r}$ and an integer lattice, which will imply an lower bound on ${\rm{ex}}_r^{\rm{lin}}(n,P_{\ell_0}^r\cup S_{\ell_1}^r\cup\cdots\cup S_{\ell_k}^r)$.

\begin{thm}
Suppose integers $r\geq3$, $\ell_0\geq4$, $k\geq1$ and $c>0$. Let $\ell_1\geq\ell_2\geq\cdots\geq\ell_k$, $\ell=\min\{\ell_0,\ell_k\}$ and $n=c(r-1)^k\big((\ell-1)(r-1)+1\big)+k$. If $\ell,r$ satisfy $(\ell-1)\big((\ell-1)(r-1)+1\big)\equiv0 ({\rm{mod}}\ r)$, and $k,r$ satisfy $(k-1)\equiv0 ({\rm{mod}}\ r-1)$ and $k(k-1)\equiv0 ({\rm{mod}}\ r(r-1))$, then for large $\ell$ and $k$,
\begin{eqnarray*}
{\rm{ex}}_r^{\rm{lin}}(n,P_{\ell_0}^r\cup S_{\ell_1}^r\cup\cdots\cup S_{\ell_k}^r)\geq \Big(\frac{k}{r-1}+\frac{\ell-1}{r}\Big)(n-k)+\frac{k(k-1)}{r(r-1)}.
\end{eqnarray*}
\end{thm}
\begin{proof}[{\bf{Proof}}]
Since $\ell,k$ are large, and $(\ell-1)(r-1)+1,r$ and $k,r$ satisfy (3.1), by Theorem 3.3, the linear $r$-graphs $D_{(\ell-1)(r-1)+1,r}$ and $D_{k,r}$ exist.

Note that the edges of order $r-1$ in $E(D_{(\ell-1)(r-1)+1,r}\square[r-1]^k)$ can be partitioned into $k$ classes with different colors, denoted these colors by $c_1,\cdots,c_{k}$, where there are $(r-1)^{k-1}((\ell-1)(r-1)+1)$ edges for each color.
Let $\widehat{H}$ be a linear $r$-graphs on $n$ vertices obtained from $D_{k,r}\cup c\big(D_{(\ell-1)(r-1)+1,r}\square[r-1]^k\big)$ as follows:
for each $D_{(\ell-1)(r-1)+1,r}\square[r-1]^k$ and $1\leq i\leq k$, inserting $v_i\in V(D_{k,r})$ to each of $(r-1)^{k-1}((\ell-1)(r-1)+1)$ edges of color $c_i$ of $E(D_{(\ell-1)(r-1)+1,r}\square[r-1]^k)$.

Since $|V(D_{(\ell-1)(r-1)+1,r})|<|V(P_\ell^r)|=|V(S_\ell^r)|=\ell(r-1)+1$, $D_{(\ell-1)(r-1)+1,r}$ is $P_\ell^r$-free and $S_\ell^r$-free, and $D_{(\ell-1)(r-1)+1,r}\square[r-1]^k$ is $P_\ell^r$-free and $S_\ell^r$-free. Then any $P_{\ell_0}^r$ or $S_{\ell_i}^r$ in $\widehat{H}$ contains at least one vertex in $V(D_{k,r})$, where $1\leq i\leq k$.
So $P_{\ell_0}^r\cup S_{\ell_1}^r\cup\cdots\cup S_{\ell_k}^r$ in $\widehat{H}$ contains at least $k+1$ distinct vertices in $V(D_{k,r})$. Since $|V(D_{k,r})|=k$, $\widehat{H}$ is $P_{\ell_0}^r\cup S_{\ell_1}^r\cup\cdots\cup S_{\ell_k}^r$-free.

By Proposition 3.4 and the definition of Cartesian product, we have
\begin{eqnarray*}
|E(\widehat{H})|&=& c\Big(\big((\ell-1)(r-1)+1\big)\cdot k(r-1)^{k-1}+(r-1)^k\cdot\frac{\big((\ell-1)(r-1)+1\big)(\ell-1)}{r}\Big)+\frac{k(k-1)}{r(r-1)} \\
&=& \Big(\frac{k}{r-1}+\frac{\ell-1}{r}\Big)(n-k)+\frac{k(k-1)}{r(r-1)}.
\end{eqnarray*}
\end{proof}

\section{\normalsize Linear Tur\'{a}n number for a kind of linear star-path forests}
\ \ \ \
In this section, we mainly present the proof of Theorem 1.2, which states an upper bound for the linear Tur\'{a}n number of $P_{\ell_0}^r\cup S_{\ell_1}^r\cup\cdots\cup S_{\ell_k}^r$. We first introduce a kind of $r$-graph and give an upper bound on its size.

Let $\mathcal{H}$ be a family of linear $r$-graphs. For any $H\in \mathcal{H}$, $H$ has a vertex set $V(H)=U\cup\bar{U}$ and an edge set $E(H)$, where for any edge $e\in E(H)$, $e$ satisfies $V(e)\cap U\neq\emptyset$ and $V(e)\cap \bar{U}\neq\emptyset$.

Let integer $c>0$, $n=c(r-1)^k+k$ and the set of vertices $U=\{v_1,\cdots,v_k\}$. Note that the edges in $E([r-1]^k)$ can be partitioned into $k$ classes with different colors, denoted these colors by $c_1,\cdots,c_{k}$, where there are $(r-1)^{k-1}$ edges for each color.
Let $H^*$ be a linear $r$-graph on $n$ vertices obtained from $U\cup \frac{n-k}{(r-1)^k}[r-1]^k$ as follows:
for each $[r-1]^k$ and $1\leq i\leq k$, inserting $v_i\in U$ to each of $(r-1)^{k-1}$ edges of color $c_i$ of $E([r-1]^k)$. Then $H^*\in \mathcal{H}$.

By the definition of the integer lattice $[r-1]^k$, we have
\begin{eqnarray*}
|E(H^*)|= \frac{n-k}{(r-1)^k}\cdot k(r-1)^{k-1}=\frac{k(n-k)}{r-1}.
\end{eqnarray*}

Inspired by the proof of Theorem 3 in \cite{D4}, we have the following result.

\begin{lem}
Let $H\in \mathcal{H}$ be a linear $r$-graph on $n$ vertices. Then
\begin{eqnarray*}
|E(H)|\leq \frac{|U|(n-|U|)}{r-1}.
\end{eqnarray*}
Moreover, we have $|E(H^*)|=\frac{|U|(n-|U|)}{r-1}$.
\end{lem}
\begin{proof}[{\bf{Proof}}]
Let $|U|=k$. Then $|\bar{U}|=n-k$. To estimate $|E(H)|$, we consider the number of pairs $(e,\{u,v\})$. For $e\in E(H)$, we have $|V(e)\cap U|$ choices for $u$ and $|V(e)\cap \bar{U}|$ choices for $v$. Note that $|V(e)\cap U|+|V(e)\cap \bar{U}|=r$. Then we have
\begin{eqnarray*}
(r-1)|E(H)|\leq \sum_{e\in E(H)}|V(e)\cap U|\cdot|V(e)\cap \bar{U}|.
\end{eqnarray*}
Moreover, for any given $u$ and $v$, there is at most one edge containing $u,v$ by the linearity of $H$. So the number of pairs $(e,\{u,v\})$ is at most $|U|\cdot|\bar{U}|$, i.e.
\begin{eqnarray*}
\sum_{e\in E(H)}|V(e)\cap U|\cdot|V(e)\cap \bar{U}|\leq k(n-k).
\end{eqnarray*}
Thus, we obtain $|E(H)|\leq \frac{k(n-k)}{r-1}$.
\end{proof}

\begin{thm}[\cite{D4}]
Fix integers $\ell,k\geq1$ and $r\geq2$. Then for sufficiently large $n$,
\begin{eqnarray*}
{\rm{ex}}_r^{\rm{lin}}(n,kS_\ell^r)\leq \Big(\frac{\ell-1}{r}+\frac{k-1}{r-1}\Big)(n-k+1)+\frac{\binom{k-1}{2}}{\binom{r}{2}}.
\end{eqnarray*}
Furthermore, this bound is sharp asymptotically.
\end{thm}

\begin{proof}[{\bf{Proof of Theorem 1.2.}}]
In the star-path forest $P_{\ell_0}^r\cup S_{\ell_1}^r\cup\cdots\cup S_{\ell_k}^r$, there exists $k$ stars.
If $k=0$, then by Theorem 1.1, we have ${\rm{ex}}_r^{\rm{lin}}(n,P_{\ell_0}^r)\leq \frac{(2r-3){\ell_0}}{2}n$. Suppose $k\geq1$ and the upper bound holds for any $k'<k$.
Suppose to the contrary that $H$ is a $P_{\ell_0}^r\cup S_{\ell_1}^r\cup\cdots\cup S_{\ell_k}^r$-free linear $r$-graph on $n$ vertices with
\begin{eqnarray*}
|E(H)|> \Big(\frac{k}{r-1}+\frac{(2r-3)\ell}{2}\Big)(n-k)+\frac{k(k-1)}{r(r-1)}.
\end{eqnarray*}
To complete the proof we only need to show that $H$ contains a $P_{\ell_0}^r\cup k S_{\ell}^r$, where $\ell=\max\{\ell_0,\ell_1\}$. This contradicts the assumption that $H$ is $P_{\ell_0}^r\cup S_{\ell_1}^r\cup\cdots\cup S_{\ell_k}^r$-free.

Since $n$ is sufficiently large,
we have
\begin{eqnarray*}
|E(H)|&>& \Big(\frac{k}{r-1}+\frac{(2r-3)\ell}{2}\Big)(n-k)+\frac{k(k-1)}{r(r-1)} \\
&>& \Big(\frac{\ell-1}{r}+\frac{k-1}{r-1}\Big)(n-k+1)+\frac{(k-1)(k-2)}{r(r-1)} \\
&\geq& {\rm{ex}}_r^{\rm{lin}}(n,kS_\ell^r),
\end{eqnarray*}
which implies $H$ contains a $kS_\ell^r$ by Theorem 4.2. Take any one $S_\ell^r$, $H-V(S_\ell^r)$ is $P_{\ell_0}^r\cup S_{\ell_1}^r\cup\cdots\cup S_{\ell_{k-1}}^r$-free. By induction hypothesis,
\begin{eqnarray*}
|E(H-V(S_\ell^r))|\leq \Big(\frac{k-1}{r-1}+\frac{(2r-3)\ell}{2}\Big)(n-\ell(r-1)-k)+\frac{(k-1)(k-2)}{r(r-1)}.
\end{eqnarray*}
Thus, we obtain
\begin{eqnarray*}
|E(H)|-|E(H-V(S_\ell^r))|&>& \frac{n}{r-1}+\frac{2(k-1)}{r(r-1)}+\frac{\ell(r-1)(k-1)-k}{r-1}+\frac{(r-1)(2r-3)\ell^2}{2} \\
&>& \frac{n}{r-1},
\end{eqnarray*}
which implies each $S_\ell^r$ in $H$ contains a vertex with degree at least $\frac{n}{(r-1)(\ell(r-1)+1)}$.
Denote by $U$ the set of these $k$ vertices. Then $d_H(u)\geq\frac{n}{(r-1)(\ell(r-1)+1)}$ for any $u\in U$.
Since $H$ is linear, we have $|E(H[U])|\leq \frac{k(k-1)}{r(r-1)}$.

Let $\bar{U}=V(H)\backslash U$ and $E(U,\bar{U})$ be the set of edges that contain at least one vertex of $U$ and one vertex of $\bar{U}$. By Lemma 4.1, we have $|E(U,\bar{U})|\leq \frac{k(n-k)}{r-1}$.
Then we get
\begin{eqnarray*}
|E(H[\bar{U}])|=|E(H)|-|E(H[U])|-|E(U,\bar{U})|> \frac{(2r-3)\ell}{2}(n-k)\geq {\rm{ex}}_r^{\rm{lin}}(n-k,P_{\ell_0}^r),
\end{eqnarray*}
which implies $H[\bar{U}]$ contains a $P_{\ell_0}^r$ by Theorem 1.1. Note that the number of edges containing $u$ and at least one vertex from $U\backslash\{u\}$ is at most $k-1$ 
and $|V(P_{\ell_0}^r)|={\ell_0}(r-1)+1$. When $n$ is sufficiently large, we have
\begin{eqnarray*}
d_{H-V(P_{\ell_0}^r)}(u)&\geq& d_H(u)-(k-1)-{\ell_0}(r-1)-1 \\
&\geq& \frac{n}{(r-1)(\ell(r-1)+1)}-(k-1)-{\ell_0}(r-1)-1 \\
&>& k\ell(r-1)
\end{eqnarray*}
for any $u\in U$. Then we may find $k$ vertex-disjoint $S_\ell^r$ in $H-V(P_{\ell_0}^r)$ with center vertex in $U$. Thus, $H$ contains a $P_{\ell_0}^r\cup k S_{\ell}^r$.
This completes the proof.
\end{proof}

By Theorem 3.5, another upper bound for ${\rm{ex}}_r^{\rm{lin}}(n,P_{\ell_0}^r\cup S_{\ell_1}^r\cup\cdots\cup S_{\ell_k}^r)$ in terms of the linear Tur\'{a}n number of $P_\ell^r$ can be obtained.

\begin{thm}
Suppose integers $r\geq3$, $\ell_0\geq4$, $k\geq0$ and $c>0$. Let $\ell_1\geq\ell_2\geq\cdots\geq\ell_k$, $\ell=\max\{\ell_0,\ell_1\}$ and $r,\ell$ satisfy $(\ell-1)\big((\ell-1)(r-1)+1\big)\equiv0 ({\rm{mod}}\ r)$.
If $\ell$ is large and $n=c\big((\ell-1)(r-1)+1\big)$, then for sufficiently large $n$,
\begin{eqnarray*}
{\rm{ex}}_r^{\rm{lin}}(n,P_{\ell_0}^r\cup S_{\ell_1}^r\cup\cdots\cup S_{\ell_k}^r)\leq \frac{k(n-k)}{r-1}+\frac{k(k-1)}{r(r-1)}+{\rm{ex}}_r^{\rm{lin}}(n-k,P_\ell^r).
\end{eqnarray*}
\end{thm}
\begin{proof}[{\bf{Proof}}]
For $k=0$, the inequality holds. Suppose $k\geq1$ and the upper bound holds for any $k'<k$.
Suppose to the contrary that $H$ is a $P_{\ell_0}^r\cup S_{\ell_1}^r\cup\cdots\cup S_{\ell_k}^r$-free linear $r$-graph on $n$ vertices with
\begin{eqnarray*}
|E(H)|> \frac{k(n-k)}{r-1}+\frac{k(k-1)}{r(r-1)}+{\rm{ex}}_r^{\rm{lin}}(n-k,P_\ell^r).
\end{eqnarray*}
To complete the proof we only need to show that $H$ contains a $P_{\ell_0}^r\cup k S_{\ell}^r$, where $\ell=\max\{\ell_0,\ell_1\}$. This contradicts the assumption that $H$ is $P_{\ell_0}^r\cup S_{\ell_1}^r\cup\cdots\cup S_{\ell_k}^r$-free.

By Theorem 3.5, for sufficiently large $n$, we have
\begin{eqnarray*}
&{}&\frac{k(n-k)}{r-1}+\frac{k(k-1)}{r(r-1)}+{\rm{ex}}_r^{\rm{lin}}(n-k,P_\ell^r) \\
&\geq& \Big(\frac{k}{r-1}+\frac{\ell-1}{r}\Big)(n-k)+\frac{k(k-1)}{r(r-1)} \\
&>& \Big(\frac{\ell-1}{r}+\frac{k-1}{r-1}\Big)(n-k+1)+\frac{(k-1)(k-2)}{r(r-1)} \\
&\geq& {\rm{ex}}_r^{\rm{lin}}(n,kS_\ell^r).
\end{eqnarray*}
The rest part of the proof is similar to that of Theorem 1.2.
\end{proof}

\section{\normalsize Tur\'{a}n number for two kinds of linear star-path forests}
\ \ \ \
In this section, we mainly present the proofs of Theorems 1.3 and 1.4, which state the bounds for the Tur\'{a}n number of $P_\ell^r\cup kS_\ell^r$ and $k_1P_\ell^r\cup k_2S_\ell^r$ ($k_1\geq2$). Let us first provide three classical theorems that need to be used in the proof.

\begin{thm}[\cite{AA3}]
Let $r\geq3$ and $\ell\geq4$. For sufficiently large $n$,
\begin{eqnarray*}
{\rm{ex}}_r(n,P_\ell^r)={\rm{ex}}_r(n,C_\ell^r)=\binom{n}{r}-\binom{n-\lfloor\frac{\ell-1}{2}\rfloor}{r}+
\begin{cases}
0 &{\rm{if}}\ \ell\ {\rm{is}}\ {\rm{odd}}, \\
\binom{n-\lfloor\frac{\ell-1}{2}\rfloor-2}{r-2} &{\rm{if}}\ \ell\ {\rm{is}}\ {\rm{even}}.
\end{cases}
\end{eqnarray*}
\end{thm}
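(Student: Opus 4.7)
My plan is to prove the claim in two halves: first, exhibit an extremal construction giving the lower bound; second, match it from above by an inductive/extremal argument, from which the equality ${\rm ex}_r(n,P_\ell^r) = {\rm ex}_r(n,C_\ell^r)$ also drops out.

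For the lower bound, set $k = \lfloor (\ell-1)/2 \rfloor$, fix $A \subseteq V$ with $|A| = k$, and form an $r$-graph $H^{\ast}$ containing every $r$-edge meeting $A$; when $\ell$ is even, additionally fix a pair $\{x,y\} \subseteq V \setminus A$ and include every $r$-edge that contains $\{x,y\}$ and is disjoint from $A$. This gives exactly the claimed count. The construction is $P_\ell^r$-free (and $C_\ell^r$-free) because in a linear path or cycle each vertex lies in at most two edges, so the $k$ vertices of $A$ can cover at most $2k$ edges of any such copy. When $\ell$ is odd, $2k = \ell-1 < \ell$, so some edge of the copy must avoid $A$, contradicting the construction. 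When $\ell$ is even, $2k = \ell-2$, and the two uncovered edges would both need to contain $\{x,y\}$; but two edges sharing two vertices contradicts linearity.

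For the upper bound I would argue by induction on $n$, letting $f(n)$ denote the claimed formula. A short calculation shows that $f$ grows in $n$ at rate roughly $\binom{n-1}{r-1} - \binom{n-k-1}{r-1}$; thus if $H$ has a vertex $v$ of degree at most this amount, then $H-v$ is $P_\ell^r$-free on $n-1$ vertices with more than $f(n-1)$ edges, contradicting induction. So I may assume the minimum degree of $H$ is $\Omega(n^{r-1})$. Such high minimum degree allows a greedy extension to produce a linear path of length at least $\ell-1$; then I would analyse the neighbourhoods of its two endpoints, rotating the path in the manner of the classical Erd\H{o}s--Gallai argument. Either a linear path of length $\ell$ materialises (contradicting $P_\ell^r$-freeness), or the endpoints are structurally forced to meet a fixed covering set $A'$ of $k$ vertices, and in the even case the residual edges off $A'$ must all pass through a single pair, contributing at most $\binom{n-k-2}{r-2}$.

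The equality ${\rm ex}_r(n,P_\ell^r) = {\rm ex}_r(n,C_\ell^r)$ follows because the construction above is simultaneously $P_\ell^r$- and $C_\ell^r$-free, and the same structural step forces any near-extremal $P_\ell^r$-free graph $H$ to lie essentially inside it (hence also $C_\ell^r$-free). The main obstacle I expect is the stability/structure step in the upper bound: ruling out dense $P_\ell^r$-free $r$-graphs that are not of the $A$-star type. Unlike in the graph case, a junction vertex of a linear path lies in exactly two edges rather than one, which complicates rotation/extension arguments considerably, and it is precisely the tight control of how the endpoints of an $(\ell-1)$-path can fail to extend that fixes the additive constant $\binom{n-k-2}{r-2}$ in the even case.
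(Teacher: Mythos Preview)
This theorem is not proved in the paper at all: it appears as Theorem~5.1, quoted without proof from Kostochka, Mubayi and Verstra\"ete~\cite{AA3}, and is used only as a black box inside the proofs of Theorems~1.3 and~1.4. There is therefore no argument in the paper to compare your proposal against.

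As a standalone attempt, your lower-bound construction and its verification are correct and standard. The upper-bound sketch, however, is looser than it reads. The minimum-degree reduction yields only
\[
\delta(H) > f(n)-f(n-1)=\binom{n-1}{r-1}-\binom{n-k-1}{r-1}+O(n^{r-3})\approx k\binom{n-2}{r-2},
\]
which is $\Theta(n^{r-2})$, not $\Omega(n^{r-1})$ as you wrote. Since the number of $r$-edges through a fixed endpoint that meet a given $(\ell-1)$-path is itself of order $\ell(r-1)\binom{n-2}{r-2}$, and $k<\ell(r-1)$, the greedy extension to a longer linear path does \emph{not} follow from the degree bound alone. The structural/stability step you flag as ``the main obstacle'' is in fact essentially the whole difficulty, and the actual proof in~\cite{AA3} uses substantially heavier tools (delta-systems, careful analysis of link hypergraphs) than an Erd\H{o}s--Gallai style rotation. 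So your outline pins down the extremal construction and the right overall architecture, but the upper bound would need a great deal more to become a proof.
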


\begin{thm}[\cite{AA5}]
Let $r\geq3$, $\ell\geq1$ and $k\geq2$. For sufficiently large $n$,
\begin{eqnarray*}
{\rm{ex}}_r(n,kP_\ell^r)=\binom{n-1}{r-1}+\cdots+\binom{n-k\lfloor\frac{\ell+1}{2}\rfloor+1}{r-1}+
\begin{cases}
0 &{\rm{if}}\ \ell\ {\rm{is}}\ {\rm{odd}}, \\
\binom{n-k\lfloor\frac{\ell+1}{2}\rfloor-1}{r-2} &{\rm{if}}\ \ell\ {\rm{is}}\ {\rm{even}}.
\end{cases}
\end{eqnarray*}
\end{thm}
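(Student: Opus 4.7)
The theorem consists of an upper bound and a lower bound, which I address separately.

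For the upper bound, the plan is induction on $k_2$. The base case $k_2 = 0$ is tautological. For the inductive step, assume the bound holds for $k_2 - 1$, and let $H$ be an $n$-vertex $r$-graph with $|E(H)| > \binom{n}{r} - \binom{n - k_2}{r} + {\rm ex}_r(n - k_2, k_1 P_\ell^r)$. The key algebraic identity
\[
\binom{n}{r} - \binom{n - k_2}{r} = \binom{n-1}{r-1} + \binom{n-1}{r} - \binom{(n-1) - (k_2-1)}{r},
\]
together with $d(v) \leq \binom{n-1}{r-1}$ for any vertex $v$, shows that $|E(H - v)| \geq |E(H)| - \binom{n-1}{r-1}$ exceeds the RHS of the inductive hypothesis for the parameter pair $(n-1, k_2-1)$. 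Choosing $v$ to be a vertex of maximum degree---which is $\Omega(n^{r-2})$ by a double-counting argument since $|E(H)| = \Theta(n^{r-1})$---the inductive hypothesis applied to $H - v$ yields a copy $F$ of $k_1 P_\ell^r \cup (k_2 - 1) S_\ell^r$. Since $|V(F)| = (k_1 + k_2 - 1)(\ell(r-1) + 1) = O(1)$, the number of edges at $v$ meeting $V(F)$ is at most $|V(F)| \binom{n-2}{r-2} = O(n^{r-2})$, leaving room to extract $\ell$ edges at $v$ whose non-$v$ vertices are pairwise disjoint and disjoint from $V(F)$. This produces a star $S_\ell^r$ centered at $v$ and vertex-disjoint from $F$; together they form $k_1 P_\ell^r \cup k_2 S_\ell^r$ in $H$, a contradiction.

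For the lower bound, I would exhibit an explicit construction. Fix $U \subseteq [n]$ with $|U| = k_1 + k_2 - 1$. Let $G_1$ be the $r$-graph of all $r$-subsets of $[n]$ meeting $U$, so $|E(G_1)| = \binom{n}{r} - \binom{n-k_1-k_2+1}{r}$. Let $G_2$ be an $S_\ell^r$-extremal $r$-graph on $[n] \setminus U$, chosen as a disjoint union of 2-$((\ell-1)(r-1)+1, r, 1)$ designs (using Theorem 4.3 for existence when $n$ satisfies the congruence conditions, and absorbing any discrepancy in lower-order terms otherwise). Each design block has $(\ell-1)(r-1)+1 < \ell(r-1)+1 = |V(P_\ell^r)|$ vertices, so $G_2$ is simultaneously $S_\ell^r$-free (its maximum degree is $\ell - 1$) and $P_\ell^r$-free. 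Setting $H := G_1 \cup G_2$ achieves the claimed edge count.

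To verify $H$ is $k_1 P_\ell^r \cup k_2 S_\ell^r$-free, suppose for contradiction a copy $F \subseteq H$ exists; its $k_1 + k_2$ components are vertex-disjoint. Every path component must meet $U$ (else it lies entirely in $G_2$, contradicting $P_\ell^r$-freeness of $G_2$). Every star component must also meet $U$: if a star's center $c$ lies outside $U$, the $S_\ell^r$-freeness of $G_2$ forces $d_{G_2}(c) \leq \ell - 1$, so at least one of the $\ell$ edges at $c$ lies in $G_1$ and must touch $U$ via a leaf. Thus the $k_1 + k_2$ components require $k_1 + k_2$ distinct vertices in $U$, contradicting $|U| = k_1 + k_2 - 1$.

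The main obstacle for the upper bound is quantitative: verifying that $H$ admits a vertex whose degree is large enough for the star extension to succeed, i.e., $\Delta(H) \geq |V(F)| \binom{n-2}{r-2} + \ell$ with the correct leading constant; this may require an iterative peeling argument analogous to the proof of Theorem 1.2 rather than a single averaging step. The main obstacle for the lower bound is handling the divisibility conditions of Theorem 4.3 to guarantee existence of the design-based $S_\ell^r$-extremal construction for arbitrary sufficiently large $n$.
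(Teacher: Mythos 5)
There is a fundamental mismatch: the statement you were asked to prove is Theorem 5.2, the Bushaw--Kettle determination of ${\rm ex}_r(n,kP_\ell^r)$ for $k$ vertex-disjoint linear paths (a result the paper does not prove but cites from the literature), whereas your proposal is an argument for Theorem 1.4, which concerns the mixed forest $k_1P_\ell^r\cup k_2S_\ell^r$. Concretely: (i) the forbidden configuration in Theorem 5.2 contains no stars, yet your whole argument is organized around peeling off star components by induction on $k_2$; (ii) Theorem 5.2 asserts an exact equality, with extremal value $\sum_{i=1}^{k\lfloor(\ell+1)/2\rfloor-1}\binom{n-i}{r-1}$ plus a parity-dependent correction, whereas your upper and lower bounds do not match even for the problem you did address (the lower bound involves ${\rm ex}_r(\,\cdot\,,S_\ell^r)=O(n^{r-2})$ while the upper bound involves ${\rm ex}_r(\,\cdot\,,k_1P_\ell^r)=\Theta(n^{r-1})$); and (iii), most seriously, your induction bottoms out at the assertion that $|E(H)|>{\rm ex}_r(n,k_1P_\ell^r)$ forces a copy of $k_1P_\ell^r$, i.e., it treats the quantity ${\rm ex}_r(\,\cdot\,,kP_\ell^r)$ as already known. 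That quantity is precisely what Theorem 5.2 computes, so as a proof of Theorem 5.2 the argument is circular.

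A genuine proof would need entirely different content. The extremal construction is specific to paths: fix a set $L$ of $k\lfloor\frac{\ell+1}{2}\rfloor-1$ vertices and take all $r$-sets meeting $L$ (for even $\ell$, add all $r$-sets containing a fixed pair of vertices outside $L$); the point is that any copy of $P_\ell^r$ in this graph must use at least $\lfloor\frac{\ell+1}{2}\rfloor$ vertices of $L$ because each vertex of a linear path lies in at most two of its edges, so $k$ disjoint copies cannot fit. Your set $U$ of size $k_1+k_2-1$ and your design-based $S_\ell^r$-free remainder produce a different (and, for this problem, suboptimal) bound. The hard direction --- showing that no $r$-graph exceeding this count avoids $k$ disjoint copies of $P_\ell^r$ --- is the technical core of the Bushaw--Kettle paper and rests on their single-path Tur\'{a}n result together with a careful induction on $k$ and a stability-type analysis of near-extremal configurations; none of that machinery appears in your sketch. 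If your intent was to prove Theorem 1.4, the skeleton you give is essentially the paper's own argument for that theorem, but it cannot be spliced in as a proof of the statement actually posed here.
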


The problem of determining the Tur\'{a}n number for $S_\ell^r$ is still open. Duke and Erd\H{o}s \cite{AA4} gave an upper bound for ${\rm{ex}}_r(n,S_\ell^r)$ as follows.

\begin{thm}[\cite{AA4}]
Fix integers $\ell\geq2$ and $r\geq3$. Then there exists a constant $c(r)$ such that for sufficiently large $n$,
\begin{eqnarray*}
{\rm{ex}}_r(n,S_\ell^r)\leq c(r)\ell(\ell-1)n^{r-2}.
\end{eqnarray*}
\end{thm}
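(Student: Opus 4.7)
The plan is to reduce the problem to bounding vertex degrees via the link hypergraph, and then to aggregate the local bounds through a peeling/cover argument.

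For each vertex $v\in V(H)$, define the link $L_v$ as the $(r-1)$-uniform hypergraph on $V(H)\setminus\{v\}$ whose edges are $\{e\setminus\{v\}:v\in e\in E(H)\}$. A copy of $S^r_\ell$ centered at $v$ in $H$ corresponds precisely to a matching of size $\ell$ in $L_v$, so the $S^r_\ell$-freeness of $H$ forces the matching number of every $L_v$ to be at most $\ell-1$. Taking a maximum matching $M_v$ in $L_v$, the vertex set $T_v:=V(M_v)$ has $|T_v|\leq (r-1)(\ell-1)$, and by maximality of $M_v$ every edge of $L_v$ meets $T_v$. Since each vertex of $T_v$ lies in at most $\binom{n-2}{r-2}$ of the $(r-1)$-subsets of $V(H)\setminus\{v\}$, we get $d_H(v)=|E(L_v)|\leq (r-1)(\ell-1)\binom{n-2}{r-2}$, which is $O(\ell\,n^{r-2})$.

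A direct sum $r\,|E(H)|=\sum_v d_H(v)$ only yields $|E(H)|=O(\ell\,n^{r-1})$, weaker than the target by a factor of $n/\ell$. To sharpen the estimate to $c(r)\ell(\ell-1)n^{r-2}$, the key additional idea is that the cover sets $T_v$ can be made to concentrate on a small ``global kernel'' $K$ of size $O(\ell)$ such that every edge of $H$ meets $K$. Given such a $K$, one has $|E(H)|\leq |K|\cdot\max_{u\in K}d_H(u)\leq O(\ell)\cdot O(\ell\,n^{r-2})=O(\ell^2\,n^{r-2})$. The kernel is produced iteratively: repeatedly choose a vertex of current maximum degree, add it together with its cover to $K$, and delete it; after $O(\ell)$ rounds the residual hypergraph has every vertex-degree so small that the remaining edges contribute only a lower-order term.

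The main obstacle is to justify the iterative extraction rigorously: one must show that removing a heavy vertex together with its cover strictly reduces the maximum degree, and that the process terminates after only $O(\ell)$ rounds. This amounts to a structural statement that in an $S^r_\ell$-free hypergraph only boundedly many vertices can carry extremal links simultaneously, and is the technical core of the Duke--Erd\H{o}s argument in \cite{AA4}. An alternative route is a shifting reduction to a canonical $S^r_\ell$-free family, followed by direct enumeration of edges incident to a small initial segment of vertices.
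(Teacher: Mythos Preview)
The paper does not prove this theorem at all; it is simply quoted from Duke and Erd\H{o}s \cite{AA4} and then used as a black box in the proofs of Theorems~1.3 and~1.4. So there is no ``paper's own proof'' to compare against.

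Your first paragraph (the link/cover argument giving $d_H(v)\le (r-1)(\ell-1)\binom{n-2}{r-2}$, and the observation that a plain degree sum only yields $O(\ell\,n^{r-1})$) is completely correct. The trouble is the second half. The claim that there is a ``global kernel'' $K$ of size $O(\ell)$ meeting every edge of an $S_\ell^r$-free $r$-graph is simply false, and the iterative extraction you outline cannot produce it. Take $H$ to be a disjoint union of $\lfloor n/m\rfloor$ copies of $K_m^r$ with $m=(r-1)\ell$. Each link is $K_{m-1}^{r-1}$, whose matching number is $\lfloor(m-1)/(r-1)\rfloor<\ell$, so $H$ is $S_\ell^r$-free. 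But any set meeting every edge must contain at least $m-r+1$ vertices from each copy, hence has size $\Theta(n)$, not $O(\ell)$. In this same example, every vertex carries an \emph{identical} link, so the structural assertion ``only boundedly many vertices can carry extremal links simultaneously'' is false as well, and deleting any $O(\ell)$ vertices leaves almost all copies untouched; the residual still has $\Theta(n\ell^{r-1})$ edges, which for $r=3$ is exactly the target order $\Theta(\ell^2 n)$ and not a lower-order term.

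So the gap is genuine: the strengthening from $O(\ell n^{r-1})$ to $O(\ell^2 n^{r-2})$ cannot come from a small global cover, because none exists. The Duke--Erd\H{o}s argument uses a different mechanism (a more refined double count exploiting the matching bound inside each link, rather than a kernel extraction); if you want to reconstruct it, the disjoint-cliques example above is a good test case to keep in mind, since any valid proof has to go through it without assuming connectivity or degree concentration.
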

Note that when $n$ is sufficiently large and $\ell\,|\,n$, we have ${\rm{ex}}_2(n,P_\ell)={\rm{ex}}_2(n,S_\ell)=\frac{(\ell-1)n}{2}$ \cite{ZZ1,D4}. However, for any $r\geq3$ and $\ell\geq4$, we have ${\rm{ex}}_r(n,S_\ell^r)<{\rm{ex}}_r(n,P_\ell^r)$ by Theorems 5.1 and 5.3.

\begin{proof}[{\bf{Proof of Theorem 1.3.}}]
Let us first consider the upper bound. Suppose to the contrary that $H$ is an $r$-graph on $n$ vertices with
\begin{eqnarray*}
|E(H)|> \binom{n}{r}-\binom{n-k}{r}+{\rm{ex}}_r(n-k,P_\ell^r).
\end{eqnarray*}
We will show that $H$ contains a $P_\ell^r\cup kS_\ell^r$ by induction on $k$. If $k=0$, then $|E(H)|> {\rm{ex}}_r(n,P_\ell^r)$ and $H$ contains a $P_\ell^r$. Suppose $k\geq1$ and the statement holds for any $k'<k$.

{\bf{Case 1.}} $\Delta(H)<\frac{\binom{n}{r}-\binom{n-k}{r}}{k(\ell(r-1)+1)}$.

According to Theorem 5.1, we have
\begin{eqnarray*}
|E(H)|&>& \binom{n}{r}-\binom{n-k}{r}+{\rm{ex}}_r(n-k,P_\ell^r) \\
&>& \binom{n}{r}-\binom{n-k+1}{r}+{\rm{ex}}_r(n-k+1,P_\ell^r).
\end{eqnarray*}
By induction hypothesis, $H$ contains a $P_\ell^r\cup (k-1)S_\ell^r$. Note that ${\rm{ex}}_r(n,S_\ell^r)<{\rm{ex}}_r(n,P_\ell^r)$ by Theorems 5.1 and 5.3. Then
\begin{eqnarray*}
|E(H-V(P_\ell^r\cup (k-1)S_\ell^r))|&\geq& |E(H)|-k(\ell(r-1)+1)\Delta(H) \\
&>& {\rm{ex}}_r(n-k,P_\ell^r) \\
&>& {\rm{ex}}_r(n-k,S_\ell^r),
\end{eqnarray*}
which implies $H-V(P_\ell^r\cup (k-1)S_\ell^r)$ contains a $S_\ell^r$ by Theorems 5.3. So $H$ contains a $P_\ell^r\cup kS_\ell^r$.

{\bf{Case 2.}} $\Delta(H)\geq \frac{\binom{n}{r}-\binom{n-k}{r}}{k(\ell(r-1)+1)}$.

Suppose $u\in V(H)$ is a vertex with maximum degree. Since $d_H(u)=\Delta(H)\leq \binom{n-1}{r-1}$,
\begin{eqnarray*}
|E(H-\{u\})|&>& \binom{n}{r}-\binom{n-k}{r}+{\rm{ex}}_r(n-k,P_\ell^r)-\binom{n-1}{r-1} \\
&=& \binom{n-1}{r}-\binom{n-k}{r}+{\rm{ex}}_r(n-k,P_\ell^r).
\end{eqnarray*}
By induction hypothesis, $H-\{u\}$ contains a $P_\ell^r\cup (k-1)S_\ell^r$.

For any $v\in V(P_\ell^r\cup (k-1)S_\ell^r)$, the number of edges containing $u$ and $v$ is at most $\binom{n-2}{r-2}$. Note that $d_H(u)=\Delta(H)\geq \frac{\binom{n}{r}-\binom{n-k}{r}}{k(\ell(r-1)+1)}$ and $|V(P_\ell^r\cup (k-1)S_\ell^r)|=k(\ell(r-1)+1)$. When $n$ is sufficiently large,
\begin{eqnarray*}
d_{H-V\big(P_\ell^r\cup (k-1)S_\ell^r\big)}(u)&\geq& d_H(u)-k(\ell(r-1)+1)\binom{n-2}{r-2} \\
&>& (\ell(r-1)+1)\binom{n-2}{r-2}.
\end{eqnarray*}
Then we may find a $S_\ell^r$ that is vertex-disjoint from the $P_\ell^r\cup (k-1)S_\ell^r$. Thus, $H$ contains a $P_\ell^r\cup kS_\ell^r$.

We now continue with the lower bound.
Let $U$ be the set of $k$ vertices and $G$ be a $\{P_\ell^r, S_\ell^r\}$-free $r$-graph with $n-k$ vertices and ${\rm{ex}}_r(n-k,\{P_\ell^r, S_\ell^r\})$ edges. Let $H^*$ be an $r$-graph on $n$ vertices obtained from $G$ by adding all vertices in $U$ and all edges that is incident to $U$. Since $G$ is $\{P_\ell^r, S_\ell^r\}$-free, any $P_\ell^r$ or $S_\ell^r$ in $H^*$ contains at least one vertex in $U$. So $P_\ell^r\cup kS_\ell^r$ in $H^*$ contains at least $k+1$ distinct vertices in $U$. Since $|U|=k$, $H^*$ is $P_\ell^r\cup kS_\ell^r$-free.

By the definition of $H^*$, we have
\begin{eqnarray*}
|E(H^*)|=\binom{n}{r}-\binom{n-k}{r}+{\rm{ex}}_r(n-k,\{P_\ell^r, S_\ell^r\}).
\end{eqnarray*}
This completes the proof.
\end{proof}

\begin{proof}[{\bf{Proof of Theorem 1.4.}}]
Let us first consider the upper bound. Suppose to the contrary that $H$ is an $r$-graph on $n$ vertices with
\begin{eqnarray*}
|E(H)|> \binom{n}{r}-\binom{n-k_2}{r}+{\rm{ex}}_r(n-k_2,k_1P_\ell^r).
\end{eqnarray*}
We will show that $H$ contains a $k_1P_\ell^r\cup k_2S_\ell^r$ by induction on $k_2$. If $k_2=0$, then $|E(H)|> {\rm{ex}}_r(n,k_1P_\ell^r)$ and $H$ contains a $k_1P_\ell^r$. Suppose $k_2\geq1$ and the statement holds for any $k'<k_2$.

{\bf{Case 1.}} $\Delta(H)<\frac{\binom{n}{r}-\binom{n-k_2}{r}}{(k_1+k_2-1)(\ell(r-1)+1)}$.

According to Theorem 5.2, we have
\begin{eqnarray*}
|E(H)|&>& \binom{n}{r}-\binom{n-k_2}{r}+{\rm{ex}}_r(n-k_2,k_1P_\ell^r) \\
&>& \binom{n}{r}-\binom{n-k_2+1}{r}+{\rm{ex}}_r(n-k_2+1,k_1P_\ell^r).
\end{eqnarray*}
By induction hypothesis, $H$ contains a $k_1P_\ell^r\cup (k_2-1)S_\ell^r$. Note that ${\rm{ex}}_r(n,S_\ell^r)<{\rm{ex}}_r(n,P_\ell^r)<{\rm{ex}}_r(n,k_1P_\ell^r)$ by Theorems 5.1, 5.2 and 5.3. Then
\begin{eqnarray*}
|E(H-V(k_1P_\ell^r\cup (k_2-1)S_\ell^r))|&\geq& |E(H)|-(k_1+k_2-1)(\ell(r-1)+1)\Delta(H) \\
&>& {\rm{ex}}_r(n-k_2,k_1P_\ell^r) \\
&>& {\rm{ex}}_r(n-k_2,S_\ell^r),
\end{eqnarray*}
which implies $H-V(k_1P_\ell^r\cup (k_2-1)S_\ell^r)$ contains a $S_\ell^r$ by Theorems 5.3. So $H$ contains a $k_1P_\ell^r\cup k_2S_\ell^r$.

{\bf{Case 2.}} $\Delta(H)\geq \frac{\binom{n}{r}-\binom{n-k_2}{r}}{(k_1+k_2-1)(\ell(r-1)+1)}$.

Suppose $u\in V(H)$ be a vertex with maximum degree. Since $d_H(u)=\Delta(H)\leq \binom{n-1}{r-1}$,
\begin{eqnarray*}
|E(H-\{u\})|&>& \binom{n}{r}-\binom{n-k_2}{r}+{\rm{ex}}_r(n-k_2,k_1P_\ell^r)-\binom{n-1}{r-1} \\
&=& \binom{n-1}{r}-\binom{n-k_2}{r}+{\rm{ex}}_r(n-k_2,k_1P_\ell^r).
\end{eqnarray*}
By induction hypothesis, $H-\{u\}$ contains a $k_1P_\ell^r\cup (k_2-1)S_\ell^r$.

For any $v\in V(k_1P_\ell^r\cup (k_2-1)S_\ell^r)$, the number of edges containing $u$ and $v$ is at most $\binom{n-2}{r-2}$. Note that $d_H(u)=\Delta(H)\geq \frac{\binom{n}{r}-\binom{n-k_2}{r}}{(k_1+k_2-1)(\ell(r-1)+1)}$ and $|V(k_1P_\ell^r\cup (k_2-1)S_\ell^r)|=(k_1+k_2-1)(\ell(r-1)+1)$. When $n$ is sufficiently large,
\begin{eqnarray*}
d_{H-V\big(k_1P_\ell^r\cup (k_2-1)S_\ell^r\big)}(u)&\geq& d_H(u)-(k_1+k_2-1)(\ell(r-1)+1)\binom{n-2}{r-2} \\
&>& (\ell(r-1)+1)\binom{n-2}{r-2}.
\end{eqnarray*}
Then we may find a $S_\ell^r$ that is vertex-disjoint from the $k_1P_\ell^r\cup (k_2-1)S_\ell^r$. Thus, $H$ contains a $k_1P_\ell^r\cup k_2S_\ell^r$.

We now continue with the lower bound.
Let $U$ be the set of $k_1+k_2-1$ vertices and $G$ be a $\{P_\ell^r, S_\ell^r\}$-free $r$-graph with $n-k_1-k_2+1$ vertices and ${\rm{ex}}_r(n-k_1-k_2+1,\{P_\ell^r, S_\ell^r\})$ edges. Let $H^*$ be an $r$-graph on $n$ vertices obtained from $G$ by adding all vertices in $U$ and all edges that is incident to $U$. Since $G$ is $\{P_\ell^r, S_\ell^r\}$-free, any $P_\ell^r$ or $S_\ell^r$ in $H^*$ contains at least one vertex in $U$. So $k_1P_\ell^r\cup k_2S_\ell^r$ in $H^*$ contains at least $k_1+k_2$ distinct vertices in $U$. Since $|U|=k_1+k_2-1$, $H^*$ is $k_1P_\ell^r\cup k_2S_\ell^r$-free.

By the definition of $H^*$, we have
\begin{eqnarray*}
|E(H^*)|=\binom{n}{r}-\binom{n-k_1-k_2+1}{r}+{\rm{ex}}_r(n-k_1-k_2+1,\{P_\ell^r, S_\ell^r\}).
\end{eqnarray*}
This completes the proof.
\end{proof}

\ \ \ \

\noindent{\bf{Declaration of interest}}

The authors declare no known conflicts of interest.

\smallskip

\smallskip

\noindent{\bf{Acknowledgements}}

The authors would like to express their sincere gratitude to the editor and the referees for a very careful reading of the paper and for all their insightful comments and valuable suggestions, which led to a number of improvements in this paper.

\end{document}